\newcommand{\dd}{\,\mathrm{d}}
\newcommand{\RR}{\mathbb{R}}
\newcommand{\ds}{\displaystyle}
\newcommand{\dD}{\,\mathrm{d}}
\newcommand{\lla}{\left\langle}
\newcommand{\rra}{\right\rangle}
\newcommand{\eps}{\varepsilon}
\newcommand{\ffast}{f_\text{fast}}
\newcommand{\rhofast}{\rho_\text{fast}}
\newcommand{\jfast}{j_\text{fast}}
\newcommand{\trho}{{\tilde{\rho}}}
\newcommand{\trhofast}{{\tilde{\rho}}_\text{fast}}
\newcommand{\tjfast}{{\tilde{j}}_\text{fast}}
\newcommand\Max[1]{{#1}}
\newtheorem{theo}{Theorem}[section]
\newtheorem{prop}[theo]{Proposition}
\newtheorem{lem}[theo]{Lemma}
\newtheorem{rema}[theo]{Remark}
\newtheorem{conj}[theo]{Conjecture}
\title[Fokker-Planck modeling of gene expression]{A Fokker-Planck approach to the study of robustness in gene expression}
\date{\today}
\author{P. Degond}
\address[P. Degond]{Department of Mathematics, Imperial College London, South Kensington Campus, London, SW7 2AZ, UK}
\email{pdegond@imperial.ac.uk}
\author{M. Herda}
\address[M. Herda]{Inria, Univ. Lille, CNRS, UMR 8524 – Laboratoire Paul Painlevé, F-59000 Lille, France.}
\email{maxime.herda@inria.fr}
\author{S. Mirrahimi}
\address[S. Mirrahimi]{Institut de Math\'ematiques de Toulouse, UMR 5219, Universit\'e de Toulouse, CNRS, UPS IMT, F-31062 Toulouse Cedex 9, France.}
\email{sepideh.mirrahimi@math.univ-toulouse.fr}
\begin{document}
    
    \begin{abstract}
        We study several Fokker-Planck equations arising from a stochastic chemical kinetic  system modeling a gene regulatory network in biology. The densities solving the Fokker-Planck equations describe the joint distribution of the mRNA and $\mu$RNA content in a cell. We provide theoretical and numerical \Max{evidence}   that the robustness of the gene expression is increased in the presence of $\mu$RNA. At the mathematical level, increased robustness shows in a smaller coefficient of variation of the marginal density of the mRNA in the presence of $\mu$RNA. These results follow from explicit formulas for solutions. Moreover, thanks to dimensional analyses  and numerical simulations we provide qualitative insight into the role of each parameter in the model. As the increase of gene expression level comes from the underlying stochasticity in the models, we eventually discuss the choice of noise in our models and its influence on our results. 
        
        \smallskip
        
        \noindent\textbf{Keywords.} Gene expression, microRNA, Fokker-Planck equations, \Max{Inverse Gamma distributions}.
        
        \smallskip
        
        \noindent\textbf{MSC 2020 Subject Classification.}  35Q84, 92C40, 92D20, 35Q92, 65M08. 
        
    \end{abstract}

    \maketitle
    \tableofcontents
    
    \section{Introduction}

    This paper is concerned with a mathematical model for a gene regulatory network involved in the regulation of DNA transcription. DNA transcription is part of the mechanism by which a sequence of the nuclear DNA is translated into the corresponding protein. The transcription is initiated by the binding of a transcription factor, which is usually another protein, onto the gene's DNA-binding domain. Once bound, the transcription factor promotes the transcription of the nuclear DNA into a messenger RNA (further denoted by mRNA), which, once released, is \Max{translated} 
    into the corresponding protein by the ribosomes. This process is subject to a high level of noise due to the large variability of the conditions that prevail in the cell and the nucleus at the moment of the transcription. Yet, a rather stable amount of the final protein is needed for the good operation of the cell. The processes that regulate noise levels and maintain cell homeostasis have been scrutinized for a long time. Recently, micro RNAs (further referred to as $\mu$RNAs) have occupied the front of the scene. These are very short RNAs which do not code for proteins. Many different sorts of $\mu$RNAs are involved in various epigenetic processes. But one of their roles seems precisely the reduction of noise level in DNA transcription. In this scenario, the $\mu$RNAs are synthesized together with the mRNAs. Then, some of the synthesized $\mu$RNAs bind to the mRNAs and de-activate them. These $\mu$RNA-bound mRNA become unavailable for protein synthesis. It has been proposed that this paradoxical mechanism which seems to reduce the efficiency of DNA transcription may indeed have a role in noise regulation (see \cite{bleris2011synthetic,ebert2012roles,blevins2015micrornas} and the review \cite{herranz2010micrornas}). The goal of the present contribution is to propose a mathematical model of the $\mu$RNA-mRNA interaction and to use it to investigate the role of $\mu$RNAs as potential noise regulators. 
    
    Specifically, in this paper, we propose a stochastic chemical kinetic model for the mRNA and $\mu$RNA content in a cell. The production of mRNAs by the transcription factor and their inactivation through $\mu$RNA binding are taken into account. More precisely, our model is a simplified version of the circuit used in \cite[Fig. 2A and 2A']{osella_2011_role}. We consider a ligand involved in the production of both an mRNA and a $\mu$RNA, the $\mu$RNA having the possibility to bind to the mRNA and deactivate it. By contrast to \cite{osella_2011_role}, we disregard the way the ligand is produced and consider that the ligand is such that there is a constant production rate of both mRNA and $\mu$RNA. A second difference to \cite{osella_2011_role} is that we disregard the transcription step of the mRNA into proteins. While \cite{osella_2011_role} proposes to model the $\mu$RNA as acting on \Max{translation}, 
     we assume that the $\mu$RNA directly influences the number of mRNA available for transcription. Therefore, we directly relate the gene expression level to the number of $\mu$RNA-free mRNA also referred to as the number of unbound mRNA. In order to model the stochastic variability in the production of the RNAs, a multiplicative noise is added to the production rate at all time. From the resulting system of stochastic differential equations, we introduce the joint probability density for mRNA and µRNA which solves a deterministic Fokker-Planck equation. The mathematical object of interest is the stationary density solving the Fokker-Planck equation and more precisely the marginal density of the mRNA. The coefficient of variation (also called cell-to-cell variation) of this mRNA density, which is its standard deviation divided by its the expectation, is often considered as the relevant \Max{criterion} 
     for measuring the robustness of gene expression (see for instance \cite{osella_2011_role}). 
    
    Our main goal in this contribution is to provide theoretical and numerical \Max{evidence}   that the robustness of the gene expression is increased in the presence of $\mu$RNA. At the theoretical level we derive a number of analytical formulas either for particular subsets of parameters of the model or under some time-scale separation hypotheses. From these formulas we can easily compute the cell to cell variation numerically and verify the increased robustness of gene expression when binding  with \Max{$\mu$RNA}  happens in the model. For general sets of parameters, the solution cannot be computed analytically. However we can prove well-posedness of the model and solve the PDE with a specifically designed numerical scheme. From the approximate solution, we compute the coefficient of variation and verify the hypothesis of increased gene expression.

    Another classical approach to the study of noise in gene regulatory networks is through the chemical master equation \cite{kampen_1981_stochastic} which is solved numerically by means of Gillespie’s algorithm \cite{gillespie_1976_general}, see e.g.
\cite{bosia2012gene, osella_2011_role}. Here, we use a stochastic chemical kinetic model through its associated Kolmogorov-Fokker-Planck equation. Chemical kinetics is a good approximation of the chemical master equation when the number of copies of each molecule is large. This is not the case in a cell where sometimes as few as a 100 copies of some molecules are available. Specifically, including a stochastic term in the chemical kinetic approach is a way to retain some of the randomness of the process while keeping the model complexity tractable. This ultimately leads to a Fokker-Planck model for the joint distribution of mRNAs and $\mu$RNAs. In \cite{degond2019uncertainty}, a similar chemical kinetic model is introduced with a different modelling of stochasticity. The effect of the noise is taken into account by adding some uncertainty in the (steady) source term and the initial data. The authors are interested in looking at how this uncertainty propagates to the mRNA content and in comparing this uncertainty between situations including µRNA production or not. The uncertainty is modeled by random variables with given probability density functions. Compared to \cite{degond2019uncertainty}, the Fokker-Planck approach has the advantage that the random perturbations do not only affect the initial condition and the source term, but are present at all times and \Max{vary} 
through time. We believe that this is coherent with how stochasticity in a cell arises through time-varying ecological or biological factors.

While Fokker-Planck equations are widely used models in mathematical biology \cite{perthame_2015_parabolic}, their use for the study of gene regulatory network is, up to our knowledge, scarce (see e.g. \cite{lotstedt2006dimensional}). Compared to other approaches, the Fokker-Planck model enables us  to derive analytical formulas for solutions in certain cases. This is particularly handy for understanding the role of each parameter in the model, calibrating them from real-world data and perform fast numerical computations. Nevertheless, in the general case, the theoretical study and the numerical simulation of the model remains challenging because of the unboundedness of the drift and diffusion coefficients. We believe that we give below all the tools for handling these difficulties, and that our simple model \Max{provides} 
a convincing mathematical interpretation of the increase of gene expression in the presence of  $\mu$RNAs.
    
    The paper is organized as follows. In Section~\ref{sec:models}, we introduce the system of SDEs and the corresponding Fokker-Planck models. In Section~\ref{sec:solve}, we discuss the well-posedness of the Fokker-Planck equations and derive analytical formulas for solutions under some simplifying hypotheses. In Section~\ref{sec:noise_fast}, we use the analytical formulas for solutions to give mathematical and numerical proofs of the decrease of cell-to-cell variation in the presence of $\mu$RNA. In Section~\ref{sec:noise_main}, we propose a numerical scheme for solving the main Fokker-Planck model and gather further \Max{evidence}   confirming the hypothesis of increased gene expression from the simulations. Finally, in Section~\ref{sec:comments} we discuss the particular choice of multiplicative noise (\textit{i.e.} the diffusion coefficient in the Fokker-Planck equation) in our model. In the appendix, we derive weighted Poincaré inequalities for gamma and inverse-gamma distributions which are useful in the analysis of Section~\ref{sec:solve}. \Max{The code used for numerical simulations in this paper is publicly available on GitLab \cite{code}.}

    \section{Presentation of the models}\label{sec:models}
    
    In this section, we introduce three steady Fokker-Planck models whose solutions describe the distribution of unbound mRNA and $\mu$RNA within a cell. The solutions to these equations can be interpreted as the probability density functions associated with the steady states of  stochastic chemical kinetic systems describing the production and destruction of mRNA and $\mu$RNA. In Section~\ref{sec:binding} we introduce the main model for which the consumption of RNAs is either due to external factors in the cell (\Max{translation}, 
    \emph{etc.})  or to binding between the two types of mRNA and $\mu$RNA. Then, for comparison, in Section~\ref{sec:free} we introduce the same model without binding between RNAs. Finally in Section~\ref{sec:fast}, we derive an approximate version of the first  model, by considering that reactions involving $\mu$RNAs are infinitely faster than those involving mRNAs, which amplifies the binding phenomenon and mathematically allows for the derivation of analytical formulas for solutions. The latter will be made explicit in Section~\ref{sec:solve}.
    
    \subsection{Dynamics of mRNA and $\mu$RNA with binding}\label{sec:binding}
    We denote by $r_t$ the number of unbound mRNA and $\mu_t$ the number of unbound \Max{$\mu$RNA}  of a given cell at time t. The kinetics of unbound mRNA and $\mu$RNA is then given by
    the following stochastic differential equations
    \begin{equation}
        \left\{
        \begin{array}{ccccc}
            \dd r_t&=&(c_r - c\, r_t\, \mu_t - k_r\, r_t)\, \dd t &+&\ \sqrt{2\sigma_r}\, r_t\, \dd B_t^1\,,\\[1em]
            \dd\mu_t\, &=&\, (c_\mu\, - c\, r_t\, \mu_t - k_\mu\, \mu_t)\, \dd t &+& \sqrt{2\sigma_\mu}\, \mu_t\, \dd B_t^2\,,
        \end{array}
        \right.
        \label{eqrmu}
    \end{equation}
    with $c_r$, $c_\mu$, $k_r$, $k_\mu$, $\sigma_r$, $\sigma_\mu$ being some given positive constants and $c$ being a given non-negative constant.  Let us detail the meaning of each term in the modeling. The first term of each equation models the constant production of mRNA (\emph{resp.} $\mu$RNA) by the ligand at a rate $c_r$  (\emph{resp.} $c_\mu$). The second term models the binding of the $\mu$RNA to the mRNA. Unbound mRNA and $\mu$RNA are consumed by this process at the same rate. The rate increases with both the number of mRNA and $\mu$RNA. In the third term, the parameters $k_r$ and $k_\mu$ are the rates of consumption of the unbound mRNA or $\mu$RNA by various decay mechanisms. The last term in both equations represents stochastic fluctuations in the production and destruction mechanisms of each species. It relies on a white noise $\dd B_t / \dd t$ where $B_t = (B_t^1,B_t^2)$ is a \Max{two-dimensional} 
    standard Brownian motion. The intensity of the stochastic noise is quantified by the parameters $\sqrt{2\sigma_r}\,r_t$ and $\sqrt{2\sigma_\mu}\,\mu_t$. Such a choice of multiplicative noise ensures that $r_t$ and $\mu_t$ remain non-negative along the dynamics. { The Brownian motions $B_t^1$ and $B_t^2$ are uncorrelated. The study of correlated noises or the introduction of extrinsic noise sources would be interesting, but will be discarded here.} 
    
    In this paper we are interested in the invariant measure of \eqref{eqrmu} rather than the time dynamics described by the above SDEs. From the modelling point of view, we are considering a large number of identical cells and we assume that mRNA and $\mu$RNA numbers evolve according to \eqref{eqrmu}. Then we measure the distribution of both RNAs among the population, when it has reached a steady state $f \equiv f(r,\mu)$. According to Itô's formula, the steady state should satisfy the following steady Fokker-Planck equation
    \begin{equation}
        \left\{
        \begin{array}{l}
            L f(r,\mu) \ =\ 0\,,\quad  (r,\mu)\in\Omega = (0,\infty)^2\,,\\[.75em]
            \ds\int_{\Omega}f(r,\mu)\dd r\dd \mu \ =\ 1\,,\quad f(r,\mu)\ \geq\ 0\,.
        \end{array}
        \right.
        \label{eq:FPstat}
    \end{equation}
    where the Fokker-Planck operator is given by 
    \begin{multline}
        Lf(r,\mu)\ :=\ \partial_r\left[\partial_r (\sigma_rr^2f) -(c_r - c\,r\,\mu - k_r\,r)f\right]\\[.25em]+ \partial_\mu\left[ \partial_\mu( \sigma_\mu\mu^2f) -(c_\mu - c\,r\,\mu - k_\mu\,\mu)f\right]\,.
        \label{eq:defFP}
    \end{multline}
    Since we do not model the protein production stage, we assume that the observed distribution of gene expression level is proportional to the marginal distribution of mRNA, \emph{i.e.}
    \[
    \rho(r) = \int_0^\infty f(r,\mu) \dd \mu\,.
    \]
    By integration of \eqref{eq:FPstat} in the $\mu$ variable, $\rho$ satisfies the equation 
    \begin{equation}
        \partial_r\left[\partial_r(\sigma_r\ r^2\ \rho) - (\ c_r - c\ r\ j_\mu(r) - k_r\ r)\rho  \right] = 0.
        \label{eq:rho}
    \end{equation}
    The quantity $j_\mu(r)$ is the conditional expectation of the number of $\mu$RNA within the population in the presence of $r$ molecules of mRNA and it is given by
    \begin{equation}\label{eq:jmu}
        j_\mu(r) = \frac{1}{\rho(r)}\int \mu\ f(r,\mu) \dd \mu\,.
    \end{equation}
{ Before ending this paragraph, we note an alternate way to derive the Fokker-Planck equation \eqref{eq:FPstat} from the chemical master equation through the chemical Langevin equation. We refer the interested reader to \cite{gillespie2000chemical}. }
		
    \subsection{Dynamics of free mRNA without binding}\label{sec:free}
    In the case where there is no $\mu$RNA binding, namely when $c=0$, the variables $r$ and $\mu$ are independent. Thus, the densitites of the invariant measures satisfying \eqref{eq:FPstat} are of the form 
    \[f_0(r,\mu)=\rho_0(r)\lambda(\mu)\,,\]
    where $\lambda(\mu)$ is the density of the marginal distrubution of $\mu$RNA. From the modelling point of view, it corresponds to the case where there is no feed-forward loop from $\mu$RNA. Therefore, only the dynamics on mRNA, and thus $\rho_0$, is of interest in our study. It satisfies the following steady Fokker-Planck equation obtained directly from \eqref{eq:rho}, 
    \begin{equation}
        \left\{
        \begin{array}{l}
            \partial_r\left[\sigma_r\partial_r (r^2\rho_0) -(c_r - k_r\,r)\rho_0\right]\ =\ 0\,,\\[.5em]
            \ds\int_0^\infty\rho_0(r)\dd r\ =\ 1\,,\quad \rho_0(r)\geq0\,.
        \end{array}
        \right.
        \label{eq:FP2stat}
    \end{equation}
    It can be solved explicitly as we will discuss in Section~\ref{s:exist_without}.

    \subsection{Dynamics with binding and fast $\mu$RNA}\label{sec:fast}
    The Fokker-Planck equation \eqref{eq:FPstat} cannot be solved explicitly. However, one can make some additional assumptions in order to get an explicit invariant measure providing some insight into the influence of the binding mechanism with $\mu$RNA. This is the purpose of the model considered hereafter.
    
    Let us assume the $\mu$RNA-mRNA binding rate, the $\mu$RNA decay and the noise on $\mu$RNA are large. Since the sink term of the $\mu$RNA equation is large, it is also natural to assume that the $\mu$RNA content is small. Mathematically, we assume the following scaling
    \[
     c = \frac{\tilde{c}}{\varepsilon}\,,\quad k_\mu = \frac{\tilde{k}_\mu}{\varepsilon}\,,\quad \sigma_\mu = \frac{\tilde{\sigma}_\mu}{\varepsilon}\,,\quad \mu_t = \varepsilon\tilde{\mu_t}\,,
    \]
    for some small constant $\varepsilon>0$. Then $(r_t,\tilde{\mu}_t)$ satisfies
    \[
        \left\{
        \begin{array}{ccccc}
            \dd r_t&=&(c_r - \tilde{c}\, r_t\, \tilde{\mu}_t - \tilde{k}_r\, r_t)\, \dd t &+&\ \sqrt{2\sigma_r}\, r_t\, \dd B_t^1\,,\\[1em]
            \varepsilon\,\dd\,\tilde{\mu}_t\, &=&\, (c_\mu\, - \tilde{c}\, r_t\, \tilde{\mu}_t - \tilde{k}_\mu\, \tilde{\mu}_t)\, \dd t &+& \sqrt{2\,\varepsilon\,\tilde{\sigma}_\mu}\, \tilde{\mu}_t\, \dd B_t^2\,,
        \end{array}
        \right.
    \]
    whose corresponding steady Fokker-Planck equation for the invariant measure then writes, dropping the tilde,
    \begin{multline*}
        \partial_r\left[\partial_r (\sigma_rr^2f_\eps) -(c_r - c\,r\,\mu - k_r\,r)f_\eps\right]\\[.25em]+\frac{1}{\eps} \partial_\mu\left[ \partial_\mu( \sigma_\mu\mu^2f_\eps) -(c_\mu - c\,r\,\mu - k_\mu\,\mu)f_\eps\right]\ =\ 0
    \end{multline*}
    In the limit case where $\eps\to 0$, one may expect that at least formally, the density $f_\eps$ converges to a limit density $\ffast$ satisfying
    \[
    \partial_\mu\left[ \partial_\mu( \sigma_\mu\mu^2\ffast) -(c_\mu - c\,r\,\mu - k_\mu\,\mu)\ffast\right]\ =\ 0\,.
    \]
    As $r$ is only a parameter in the previous equation and since the first marginal of $f_\eps$ still satisfies \eqref{eq:rho} for all $\eps$, one should have (formally)
    \begin{equation}\label{eq:fast}
        \left\{
        \begin{array}{l}
            \ffast(r,\mu)\ =\ \rhofast(r)M(r,\mu)\ \geq\ 0\,,\\[.75em]
            \partial_\mu\left[ \partial_\mu( \sigma_\mu\mu^2M) -(c_\mu - c\,r\,\mu - k_\mu\,\mu)M\right]\ =\ 0\,,\\[.75em]
            \partial_r\left[\partial_r(\sigma_r\ r^2\ \rhofast) - (\ c_r - c\ r\ \jfast(r) - k_r\ r)\rhofast  \right] = 0\,,\\[.75em]
            \ds\int_{0}^\infty\rhofast(r)\dd r\ =\ 1\,,\ \int_{0}^\infty M(r,\mu)\dd \mu\ =\ 1\,,\\[.75em] 
            \ds\jfast(r)\ =\ \int_0^\infty \mu\ M(r,\mu) \dd \mu\,.    
        \end{array}
        \right.
    \end{equation}

    \section{Well-posedness of the models and analytical formulas for solutions}\label{sec:solve}
    
    In this section, we show that the three previous models are well-posed. For the Fokker-Planck equations \eqref{eq:FP2stat} and \eqref{eq:fast}, we \Max{explicitly} 
    compute the solutions. They involve inverse gamma distributions.
    
    \subsection{Gamma and inverse gamma distributions}\label{sec:gamma}
    The expressions of the gamma and inverse gamma probability densities are respectively
    \begin{equation}
        \gamma_{\alpha,\beta}(x)\ =\ C_{\alpha,\beta}\,\frac{}{}x^{\alpha-1}\exp\left(-\beta x\right)\,,
        \label{eq:gamma}
    \end{equation}
    and
    \begin{equation}
        \qquad g_{\alpha,\beta}(y)\ =\ \,\frac{C_{\alpha,\beta}}{y^{1+\alpha}}\exp\left(-\frac{\beta}{y}\right)\,,
        \label{eq:invgamma}
    \end{equation}
    for $x,y\in(0,\infty)$. The normalization constant is given by $C_{\alpha,\beta} =  \beta^{\alpha}/\Gamma(\alpha)$ where $\Gamma$ is the Gamma function. Observe that by the change of variable $y = 1/x$ one has
    \[
    g_{\alpha,\beta}(y)\dD y\ =\ \gamma_{\alpha,\beta}(x)\dD x
    \]
    which justifies the terminology. Let us also recall that the first and second moments of the inverse gamma distribution are
    \begin{equation}\label{eq:firstmoment}
        \int_0^\infty y\,g_{\alpha,\beta}(y)\dd y\ =\ \frac{\beta}{\alpha-1}\,,\quad\text{ if } \alpha>1\,,\ \beta>0\,, 
    \end{equation}
    \begin{equation}\label{eq:secondmoment}
        \int_0^\infty y^2\,g_{\alpha,\beta}(y)\dd y\ =\ \frac{\beta^2}{(\alpha-1)(\alpha-2)}\,,\quad\text{ if } \alpha>2\,,\ \beta>0\,, 
    \end{equation}
    Interestingly enough, we can show (see Appendix~\ref{s:weightPoinc} for details and additional results) that inverse gamma distributions with finite first moment ($\alpha>1$) satisfy a (weighted) Poincaré inequality. The proof of the following proposition is done in Appendix~\ref{s:weightPoinc} among more general considerations.
    \begin{prop}
        Let $\alpha>1$ and $\beta>0$. Then, for any function $v$ such that the integrals make sense, one has 
        \begin{equation}
            \int_0^\infty|v(y) - \lla\,v\,\rra_{g_{\alpha,\beta}}|^2\,g_{\alpha,\beta}(y)\,\dD y\ \leq\ \frac{1}{\alpha-1}\,\int_0^\infty|v'(y)|^2\,g_{\alpha,\beta}(y)\,y^2\,\dD y\,,
            \label{eq:weightPoincare_IG_2}
        \end{equation}
        where for any probability density $\nu$ and any function $u$ on $(0,\infty)$, the notation $\lla\,u\,\rra_{\nu}$ denotes $\int u\nu$.
    \end{prop}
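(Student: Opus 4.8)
The plan is to read the inequality as a spectral–gap estimate. Associate with $g_{\alpha,\beta}$ the diffusion operator
\[
\mathcal{L} v \ :=\ \frac{1}{g_{\alpha,\beta}}\,\partial_y\!\left( y^2\, g_{\alpha,\beta}\, \partial_y v\right)\ =\ y^2\, v'' + \big((1-\alpha)\,y + \beta\big)\, v'\,,
\]
which is symmetric on $L^2(g_{\alpha,\beta})$ with Dirichlet form $\langle -\mathcal{L} v, v\rangle_{g_{\alpha,\beta}} = \int_0^\infty \Gamma(v)\, g_{\alpha,\beta}\dD y$, where $\Gamma(v) := y^2 |v'|^2$ is its carré du champ. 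The claimed inequality is precisely the statement that the spectrum of $-\mathcal{L}$ on the orthogonal complement of the constants is contained in $[\alpha-1,+\infty)$. The value $\alpha-1$ is natural: a one-line computation gives $\mathcal{L} u_1 = -(\alpha-1)\, u_1$ for $u_1(y) := y - \tfrac{\beta}{\alpha-1}$, i.e.\ for $y$ minus its expectation \eqref{eq:firstmoment}, and $\langle u_1\rangle_{g_{\alpha,\beta}} = 0$; when $\alpha > 2$ this turns $v(y) = y$ into an equality case, so the constant $1/(\alpha-1)$ is optimal.

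For the lower bound on the gap I would rely on the $\Gamma_2$-calculus rather than on Sturm--Liouville oscillation theory, since it is insensitive to whether the bottom of the spectrum is attained (for $1 < \alpha \le 2$ one has $u_1 \notin L^2(g_{\alpha,\beta})$, and a naive eigenfunction argument would need repair). A direct expansion of the iterated carré du champ of $\mathcal{L}$ gives the pleasant identity
\[
\Gamma_2(v)\ =\ \big( y^2\, v'' + y\, v'\big)^2 + \beta\, y\,|v'|^2\ \ge\ 0\,.
\]
For $v \in C_c^\infty(0,\infty)$ one then integrates $\Gamma_2(v)$ against $g_{\alpha,\beta}$ and integrates by parts once in the cross term, using $\partial_y\big(y^3 g_{\alpha,\beta}\big) = \big((2-\alpha)\, y^2 + \beta\, y\big)\, g_{\alpha,\beta}$; the $\beta\, y\,|v'|^2$ contributions cancel exactly, leaving
\begin{multline*}
\int_0^\infty \Gamma_2(v)\, g_{\alpha,\beta}\dD y\ =\ (\alpha-1)\int_0^\infty |v'|^2\, y^2\, g_{\alpha,\beta}\dD y\\ +\ \int_0^\infty |v''|^2\, y^4\, g_{\alpha,\beta}\dD y\ \geq\ (\alpha-1)\int_0^\infty \Gamma(v)\, g_{\alpha,\beta}\dD y\,.
\end{multline*}

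This integrated Bakry--Émery inequality upgrades to the Poincaré inequality by the standard semigroup argument. Let $(P_t)_{t\ge 0}$ be the Markov semigroup generated by $\mathcal{L}$ on $L^2(g_{\alpha,\beta})$. Since $\int \Gamma_2(u)\, g_{\alpha,\beta}\dD y = \|\mathcal{L} u\|^2_{L^2(g_{\alpha,\beta})}$ and $\tfrac{\dD}{\dD t}\int \Gamma(P_t v)\, g_{\alpha,\beta}\dD y = -2\int \Gamma_2(P_t v)\, g_{\alpha,\beta}\dD y$, the previous bound gives $\int \Gamma(P_t v)\, g_{\alpha,\beta}\dD y \le e^{-2(\alpha-1)t}\int \Gamma(v)\, g_{\alpha,\beta}\dD y$; integrating $-\tfrac{\dD}{\dD t}\|P_t v - \langle v\rangle_{g_{\alpha,\beta}}\|^2_{L^2(g_{\alpha,\beta})} = 2\int \Gamma(P_t v)\, g_{\alpha,\beta}\dD y$ over $t \in (0,\infty)$ and using ergodicity ($P_t v \to \langle v\rangle_{g_{\alpha,\beta}}$) yields the inequality for $v \in C_c^\infty(0,\infty)$, hence for every admissible $v$ by truncation and density. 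Equivalently, one may transport everything to $\gamma_{\alpha,\beta}$ through $x = 1/y$, since $g_{\alpha,\beta}(y)\dD y = \gamma_{\alpha,\beta}(x)\dD x$ and $|v'(y)|^2\, y^2\, g_{\alpha,\beta}(y)\dD y$ becomes $|w'(x)|^2\, x^2\, \gamma_{\alpha,\beta}(x)\dD x$ for $w(x) := v(1/x)$; the $\Gamma_2$ computation for the corresponding operator is identical.

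The step I expect to be the real work is the last one, namely setting up the functional-analytic framework rigorously: checking that $\mathcal{L}$ with core $C_c^\infty(0,\infty)$ is essentially self-adjoint on $L^2(g_{\alpha,\beta})$ and generates a conservative symmetric Markov semigroup --- both endpoints are singular, since the diffusion coefficient $y^2$ degenerates at $0$ and grows at $\infty$ --- that all the integrations by parts have vanishing boundary contributions on the relevant function classes, and that the heavy polynomial tail of $g_{\alpha,\beta}$ when $1 < \alpha \le 2$ (where $\int_0^\infty y^2\, g_{\alpha,\beta}\dD y = +\infty$) causes no trouble in the approximation step, the inequality being non-vacuous there only for $v$ with $\int_0^\infty |v'|^2\, y^2\, g_{\alpha,\beta}\dD y < \infty$. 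These verifications are routine but must be done with care; the two displayed algebraic identities are the heart of the matter.
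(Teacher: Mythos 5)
Your argument is correct in substance and the two displayed identities check out: the operator $\mathcal{L}v=y^2v''+((1-\alpha)y+\beta)v'$ is indeed the symmetric diffusion attached to $g_{\alpha,\beta}$ with carré du champ $y^2|v'|^2$, a direct computation confirms $\Gamma_2(v)=(y^2v''+yv')^2+\beta y|v'|^2$, and the integration by parts of the cross term against $g_{\alpha,\beta}$ does make the $\beta y|v'|^2$ contributions cancel, leaving $\int\Gamma_2(v)\,g_{\alpha,\beta}=(\alpha-1)\int\Gamma(v)\,g_{\alpha,\beta}+\int y^4|v''|^2g_{\alpha,\beta}$. This is, however, a genuinely different route from the paper. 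The paper proves the more general Brascamp--Lieb inequality (Proposition~\ref{p:poincare}) by the elementary double-integral Cauchy--Schwarz--Fubini argument going back to Poincar\'e, applies it to the gamma density with $(V'')^{-1}=x^2/(\alpha-1)$, and transfers the result to $g_{\alpha,\beta}$ by the substitution $y=1/x$ (Proposition~\ref{cor:poincare_gamma}); every manipulation there takes place on compactly supported smooth functions and no semigroup or self-adjointness theory is needed. Your route is the \emph{integrated} Bakry--\'Emery criterion, and it is worth noting that the paper explicitly records in Remark~\ref{r:bakem} that the \emph{pointwise} curvature-dimension condition \eqref{eq:curv_dim} fails for this diffusion (because $\beta y\geq\lambda y^2$ degenerates as $y\to\infty$, exactly the term your integration by parts absorbs); your computation shows that the weaker integrated criterion nevertheless holds with the sharp constant, which is a nice complement to that remark. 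What your approach buys is spectral information (the gap of $-\mathcal{L}$ on the complement of constants, the identification of $y-\beta/(\alpha-1)$ as the extremal direction, exponential decay of the Dirichlet form along the flow); what it costs is precisely the block of verifications you defer to the end --- essential self-adjointness on the core $C_c^\infty(0,\infty)$ for a diffusion degenerate at $0$ and unbounded at $\infty$, conservativeness and ergodicity of $(P_t)$, stability of the class on which the integrated $\Gamma_2$ bound may be applied to $P_tv$, and the approximation step when $1<\alpha\le2$. These are standard but not free, and the paper's elementary proof avoids them entirely while delivering the same constant; so if you intend this as a complete proof rather than a proof sketch, that last paragraph is where the remaining work lies.
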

    
    \subsection{Explicit mRNA distribution without binding}\label{s:exist_without}
    In the case of free mRNAs, a solution to \eqref{eq:FP2stat} can be computed explicitely and takes the form of an inverse gamma distribution.
    
    \begin{lem}\label{p:classicalrho}
        The following inverse gamma distribution
        \begin{equation}\label{eq:rho0}
        \rho_0(r)\ =\ g_{1+\frac{k_r}{\sigma_r}, \frac{c_r}{\sigma_r}}(r)\ =\ \,C_{1+\frac{k_r}{\sigma_r}, \frac{c_r}{\sigma_r}}\,\frac{1}{r^{2+\frac{k_r}{\sigma_r}}}\exp\left(-\frac{c_r}{\sigma_r\,r}\right)
        \end{equation}
        is the only classical solution to \eqref{eq:FP2stat}.
    \end{lem}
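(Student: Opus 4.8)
The plan is to turn the second–order equation in \eqref{eq:FP2stat} into a first–order linear ODE, integrate it explicitly, and then use the side constraints (normalization and nonnegativity) to single out the claimed density; existence will come for free from the explicit integration (or can be checked by direct substitution at the end).

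First I would note that, since $\rho_0$ is a classical solution, the ``flux'' $F(r):=\sigma_r\partial_r(r^2\rho_0)-(c_r-k_r r)\rho_0$ is a $C^1$ function on the connected interval $(0,\infty)$ with $F'\equiv 0$, hence $F\equiv J$ for some constant $J\in\RR$. Thus $\rho_0$ solves $\sigma_r\,(r^2\rho_0)'-(c_r-k_r r)\rho_0=J$. Setting $w:=r^2\rho_0$ this becomes the linear equation $w'-\big(\tfrac{c_r}{\sigma_r r^2}-\tfrac{k_r}{\sigma_r r}\big)w=J/\sigma_r$, whose integrating factor is $\mu(r)=r^{k_r/\sigma_r}\exp\!\big(c_r/(\sigma_r r)\big)$. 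Integrating from a fixed reference point $r_0>0$ gives
\[
r^2\rho_0(r)\,\mu(r)\ =\ r_0^2\rho_0(r_0)\,\mu(r_0)\ +\ \frac{J}{\sigma_r}\int_{r_0}^{r}\mu(s)\,\dd s\,,
\]
so that $\rho_0$ is completely determined by the two constants $J$ and $\rho_0(r_0)$, i.e. the solution set of \eqref{eq:FP2stat} before imposing the constraints is a two–parameter family.

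Next I would show $J=0$. If $J\neq0$, then as $r\to\infty$ one has $\mu(s)\sim s^{k_r/\sigma_r}$, hence $\int_{r_0}^{r}\mu(s)\,\dd s\sim\big(1+\tfrac{k_r}{\sigma_r}\big)^{-1}r^{1+k_r/\sigma_r}$, and plugging this into the displayed identity yields $\rho_0(r)\sim J/((\sigma_r+k_r)\,r)$ as $r\to\infty$; this is not integrable near $+\infty$, contradicting $\int_0^\infty\rho_0=1$ (and contradicting $\rho_0\geq0$ as well when $J<0$). Therefore $J=0$, and the identity collapses to $\rho_0(r)=C\,r^{-(2+k_r/\sigma_r)}\exp\!\big(-c_r/(\sigma_r r)\big)$ for some constant $C$. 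Since $k_r/\sigma_r>0$, the parameter $\alpha:=1+k_r/\sigma_r$ exceeds $1$, so this function lies in $L^1(0,\infty)$; imposing $\rho_0\geq0$ and $\int_0^\infty\rho_0=1$ then forces $C=C_{\alpha,\beta}$ with $\beta:=c_r/\sigma_r$, which is exactly $\rho_0=g_{1+k_r/\sigma_r,\,c_r/\sigma_r}$ by \eqref{eq:invgamma}. Substituting this $\rho_0$ back into \eqref{eq:FP2stat} confirms it is indeed a solution, so it is the unique one.

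The only step that is not pure bookkeeping is excluding a nonzero flux, and the point I expect to be slightly delicate is that this has to be read off from the behaviour at $r\to+\infty$: near $r=0^+$ the factor $1/\mu(r)$ decays so fast (because of the $\exp(c_r/(\sigma_r r))$ term) that $w(r)\to0$ regardless of $J$, so the origin gives no information and one genuinely needs the large–$r$ asymptotics above. Everything else is elementary first–order ODE integration, together with the explicit form \eqref{eq:invgamma} of the inverse gamma density recalled in Section~\ref{sec:gamma}.
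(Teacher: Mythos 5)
Your proof is correct and follows essentially the same route as the paper: reduce to a constant flux, integrate the resulting first-order ODE (your integrating factor $r^{k_r/\sigma_r}e^{c_r/(\sigma_r r)}$ is, up to a constant, exactly the weight $1/(r^2 g_{1+k_r/\sigma_r,\,c_r/\sigma_r})$ the paper uses), and discard the second solution branch because it decays only like $1/r$ at infinity and hence cannot be integrable. Your additional remark that the origin gives no information and that the exclusion must come from the large-$r$ asymptotics is a correct and slightly more explicit account of the step the paper states in one line.
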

    \begin{proof}
        First observe that
        \[
        \partial_r\left[\sigma_r\partial_r (r^2\rho_0) -(c_r - k_r\,r)\rho_0\right]\ =\ \partial_r\left[\sigma_r\,r^2\,g_{1+\frac{k_r}{\sigma_r}, \frac{c_r}{\sigma_r}}\partial_r \left(\rho_0\,g_{1+\frac{k_r}{\sigma_r}, \frac{c_r}{\sigma_r}}^{-1}\right)\right]\,.
        \]
        Therefore a solution of \eqref{eq:gamma} must be of the form
        \[
        \rho_0(r)\ =\ C_1\,g_{1+\frac{k_r}{\sigma_r}, \frac{c_r}{\sigma_r}}\int_{1}^r\sigma_r^{-1}r^{-2}g_{1+\frac{k_r}{\sigma_r}, \frac{c_r}{\sigma_r}}^{-1}(r)\dd r + C_2\,g_{1+\frac{k_r}{\sigma_r}, \frac{c_r}{\sigma_r}}\,,
        \]
        for some constants $C_1,C_2$. The first term decays like $1/r$ at infinity, thus the only probability density $\rho_0$ of this form is obtained for $C_1=0$ and $C_2=1$.
        
    \end{proof}
    The Poincaré inequality \eqref{eq:weightPoincare_IG_2} tells us that the solution of \Max{Lemma~\ref{p:classicalrho}} 
    is also the only (variational) solution in the appropriate weighted Sobolev space. Indeed, we may introduce the natural Hilbert space associated with Equation~\eqref{eq:FP2stat}, 
    \[
    X_{\alpha,\beta}\ =\ \{v:(0,\infty)\to\RR\,,\ \|v\|_{X_{\alpha,\beta}} <\infty\}
    \]
    with a squared norm given by
    \[
    \|v\|_{X_{\alpha,\beta}}^2\ =\ \int_0^\infty \left(\left[\left(v/g_{\alpha,\beta}\right)(y)\right]^2 + \left[(v/g_{\alpha,\beta})'(y)\right]^2\,y^2\right)\,g_{\alpha,\beta}(y)\dD y\,.
    \]
    Then the following uniqueness result holds.
    \begin{lem}\label{lem:classic_rho}
        The classical solution $\rho_0 = g_{1+\frac{k_r}{\sigma_r}, \frac{c_r}{\sigma_r}}$ is the only solution of \eqref{eq:FP2stat} in $X_{1+\frac{k_r}{\sigma_r}, \frac{c_r}{\sigma_r}}$.
    \end{lem}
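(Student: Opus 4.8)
The plan is to leverage the explicit description of \emph{all} local solutions of the ODE that was obtained along the proof of Lemma~\ref{p:classicalrho}. The operator in \eqref{eq:FP2stat} is a non-degenerate second-order linear differential operator on the open interval $(0,\infty)$ — its leading coefficient $\sigma_r r^2$ is smooth and strictly positive there — so any weak solution in $X_{1+k_r/\sigma_r,\,c_r/\sigma_r}$ is in particular in $H^1_{\mathrm{loc}}((0,\infty))$ and hence, by one-dimensional elliptic bootstrap, a classical solution. It must therefore coincide with the two-parameter family
\[
\rho_0\ =\ C_1\,\psi\ +\ C_2\,g_{\alpha,\beta}\,,\qquad \psi(r)\ :=\ g_{\alpha,\beta}(r)\int_1^r\frac{\dd s}{\sigma_r\,s^2\,g_{\alpha,\beta}(s)}\,,
\]
with $\alpha=1+\tfrac{k_r}{\sigma_r}$ and $\beta=\tfrac{c_r}{\sigma_r}$. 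It thus suffices to show that $\psi\notin X_{\alpha,\beta}$, which forces $C_1=0$, and that the normalization $\int_0^\infty\rho_0=1$ then forces $C_2=1$.

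For the first point I would set $v:=\psi/g_{\alpha,\beta}$, so that $v'(r)=\big(\sigma_r r^2 g_{\alpha,\beta}(r)\big)^{-1}=\big(\sigma_r C_{\alpha,\beta}\big)^{-1}r^{\alpha-1}e^{\beta/r}$ by the explicit form \eqref{eq:invgamma} of $g_{\alpha,\beta}$, and therefore
\[
|v'(r)|^2\,r^2\,g_{\alpha,\beta}(r)\ =\ \frac{1}{\sigma_r^2\,C_{\alpha,\beta}}\,r^{\alpha-1}\,e^{\beta/r}\ \underset{r\to\infty}{\sim}\ \frac{r^{\alpha-1}}{\sigma_r^2\,C_{\alpha,\beta}}\,.
\]
Since $\alpha>1$ this is not integrable at $+\infty$, so the $X_{\alpha,\beta}$-seminorm (hence the full $X_{\alpha,\beta}$-norm) of $\psi$ is infinite. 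Consequently $C_1=0$ and $\rho_0=C_2\,g_{\alpha,\beta}$; integrating over $(0,\infty)$ and using that $g_{\alpha,\beta}$ is a probability density gives $C_2=1$, i.e. $\rho_0=g_{1+k_r/\sigma_r,\,c_r/\sigma_r}$, the classical solution of Lemma~\ref{p:classicalrho}.

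An alternative route — the one suggested by the discussion preceding the statement — is to argue directly at the variational level: write $\rho_0=g_{\alpha,\beta}(1+v)$, note that the weak form of \eqref{eq:FP2stat} reads $\int_0^\infty \sigma_r r^2 g_{\alpha,\beta}\,v'\,\varphi'\,\dd r=0$, and test with $\varphi=v$ to obtain $v'\equiv 0$ and then $v\equiv\langle v\rangle_{g_{\alpha,\beta}}=0$; the weighted Poincaré inequality \eqref{eq:weightPoincare_IG_2} is exactly what makes the associated bilinear form coercive on mean-zero elements of $X_{\alpha,\beta}$, so that uniqueness follows from Lax--Milgram. In either approach, the computation above is routine and the only genuine obstacle is a regularity/density point: in the first route, justifying that a weak $X_{\alpha,\beta}$-solution of the non-degenerate ODE on $(0,\infty)$ is classical and therefore of the stated two-parameter form; in the second route, justifying that mean-zero elements of $X_{\alpha,\beta}$ can be approximated by compactly supported functions, which needs a cutoff estimate exploiting that $r^2g_{\alpha,\beta}(r)$ decays like $r^{1-\alpha}$ as $r\to\infty$ and vanishes exponentially as $r\to0$. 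Note finally that the relevant threshold $\alpha>1$ — that is $k_r/\sigma_r>0$, which holds since $k_r,\sigma_r>0$ — is precisely the one under which \eqref{eq:weightPoincare_IG_2} holds, so both viewpoints rest on the same condition.
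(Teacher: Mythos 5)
Your proposal is correct, and your second (variational) route is essentially the paper's proof: the authors integrate the difference of the equations for two solutions against $(\rho_0-\tilde{\rho}_0)\,g_{\alpha,\beta}^{-1}$ and invoke the Poincar\'e inequality \eqref{eq:weightPoincare_IG_2}, exactly your ``test with $\varphi=v$'' argument; they do not spell out the density/cutoff step you rightly flag as the one genuine technical point there. Your first route, however, is genuinely different and is the one you carry out in full: instead of an energy estimate you use that the operator is a non-degenerate second-order ODE on $(0,\infty)$, so every $X_{\alpha,\beta}$-solution is classical and lies in the explicit two-parameter family $C_1\psi+C_2 g_{\alpha,\beta}$ from the proof of Lemma~\ref{p:classicalrho}, and you then exclude $\psi$ by the direct computation $|(\psi/g_{\alpha,\beta})'|^2 r^2 g_{\alpha,\beta}=\sigma_r^{-2}C_{\alpha,\beta}^{-1}r^{\alpha-1}e^{\beta/r}$, which is not integrable at infinity since $\alpha>1$ (it in fact also blows up at $0$), so $C_1=0$ and normalization gives $C_2=1$. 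That computation is correct, and this route buys you something: it sidesteps entirely the question of whether $v$ may be used as a test function, and it shows that the second fundamental solution is excluded already by the finiteness of the weighted Dirichlet seminorm, not only by the normalization constraint used in Lemma~\ref{p:classicalrho}. What it costs is generality: it relies on having the explicit solution family of a one-dimensional ODE, whereas the paper's energy/Poincar\'e argument is the one that transfers to the time-dependent decay estimate proved immediately afterwards and, in spirit, to the two-dimensional problem \eqref{eq:FPstat} where no explicit solutions exist. Your closing observation that both viewpoints hinge on the same threshold $\alpha>1$ is accurate.
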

    \begin{proof}
        If $\rho_0$ and $\tilde{\rho_0}$ are two solutions of \eqref{eq:FP2stat}, a straightforward consequence of \eqref{eq:weightPoincare_IG_2} is that $\|\rho_0-\tilde{\rho_0}\|_{X_{1+\frac{k_r}{\sigma_r}, \frac{c_r}{\sigma_r}}}=0$. This is obtained by integrating the difference between the equation on $\rho_0$ and $\tilde{\rho_0}$  against $(\rho_0-\tilde{\rho_0})g_{1+\frac{k_r}{\sigma_r}, \frac{c_r}{\sigma_r}}^{-1}$.
    \end{proof}

    Another consequence of the Poincaré inequality is that if we consider the time evolution associated with the equation \eqref{eq:FP2stat} then solutions converge exponentially fast towards the steady state $\rho_0$. This justifies our focus on the stationary equations. The transient regime is very short and equilibrium is reached quickly. We can quantify the rate of convergence in terms of the parameters.
    \begin{prop}
        Let $\xi$ solve the Fokker-Planck equation
        \[
         \partial_t\xi(t,r)\ =\ \partial_r\left[\sigma_r\partial_r (r^2\xi(t,r)) -(c_r - k_r\,r)\xi(t,r)\right]\,,
        \]
        starting from the probability density $\xi(0,r,\mu) = \xi^\text{in}(r,\mu)$. Then for all $t\geq0$,
        \begin{multline*}
        \int_0^\infty(\xi(t,r)-\rho_0(r))^2\,g_{1+\frac{k_r}{\sigma_r}, \frac{c_r}{\sigma_r}}^{-1}(r)\dd r\\
        \leq\ e^{-\frac{k_r}{\sigma_r}\,t}\,\int_0^\infty(\xi^\text{in}(r)-\rho_0(r))^2\,g_{1+\frac{k_r}{\sigma_r}, \frac{c_r}{\sigma_r}}^{-1}(r)\dd r\,.
        \end{multline*}
    \end{prop}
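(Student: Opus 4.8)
The plan is to run the classical ``spectral gap $\Rightarrow$ exponential relaxation'' argument, with the weighted Poincar\'e inequality \eqref{eq:weightPoincare_IG_2} playing the role of the spectral gap estimate. Write $\alpha = 1 + k_r/\sigma_r$ and $\beta = c_r/\sigma_r$, so that by Lemma~\ref{p:classicalrho} the stationary density is $\rho_0 = g_{\alpha,\beta}$, and note $\alpha>1$ since $k_r,\sigma_r>0$. Because the equation for $\xi$ is linear and $\rho_0$ is a stationary solution, the difference $\eta(t,r) := \xi(t,r)-\rho_0(r)$ solves
\[
\partial_t\eta \;=\; \partial_r\bigl[\sigma_r\partial_r(r^2\eta)-(c_r-k_r\,r)\eta\bigr],
\]
and, since $\int_0^\infty\xi(t,r)\dd r = 1 = \int_0^\infty\rho_0(r)\dd r$ is conserved by the flow, $\eta(t,\cdot)$ has zero mean for every $t\ge0$.

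First I would put the operator in the ``$\rho_0$-weighted gradient flux'' form already exploited in the proof of Lemma~\ref{p:classicalrho}: with $h := \eta/\rho_0 = \eta\,g_{\alpha,\beta}^{-1}$, the identity
\[
\partial_r\bigl[\sigma_r\partial_r(r^2\eta)-(c_r-k_r\,r)\eta\bigr]\;=\;\partial_r\bigl[\sigma_r r^2 g_{\alpha,\beta}\,\partial_r h\bigr]
\]
turns the evolution into $g_{\alpha,\beta}\,\partial_t h = \partial_r[\sigma_r r^2 g_{\alpha,\beta}\,\partial_r h]$. Then I would differentiate in time the weighted functional
\[
\mathcal H(t)\;:=\;\int_0^\infty \eta(t,r)^2\, g_{\alpha,\beta}^{-1}(r)\dd r\;=\;\int_0^\infty h(t,r)^2\, g_{\alpha,\beta}(r)\dd r,
\]
and integrate by parts in $r$; the boundary terms at $0$ and $+\infty$ vanish thanks to the super-exponential decay of $g_{\alpha,\beta}$ at the origin and the combination of algebraic decay at infinity with integrability of $\eta$. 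This gives a dissipation identity of the form
\[
\frac{d}{dt}\mathcal H(t)\;=\;-\,c_\sigma\int_0^\infty r^2\,\bigl|\partial_r h(t,r)\bigr|^2\, g_{\alpha,\beta}(r)\dd r
\]
with $c_\sigma>0$ proportional to $\sigma_r$.

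To close the estimate I would apply the weighted Poincar\'e inequality \eqref{eq:weightPoincare_IG_2} to $v=h(t,\cdot)$ for each fixed $t$: since $\langle h\rangle_{g_{\alpha,\beta}}=\int_0^\infty\eta(t,r)\dd r=0$, it controls $\mathcal H(t)$ from above by $\tfrac{1}{\alpha-1}\int_0^\infty r^2|\partial_r h(t,r)|^2 g_{\alpha,\beta}(r)\dd r$, equivalently it bounds the dissipation integral from below by $(\alpha-1)\mathcal H(t)$. Plugging this into the dissipation identity yields a differential inequality $\frac{d}{dt}\mathcal H(t)\le-\lambda\,\mathcal H(t)$ with a rate $\lambda$ dictated by $\alpha-1=k_r/\sigma_r$, and Gr\"onwall's lemma then gives the stated exponential decay of $\mathcal H(t)=\int_0^\infty(\xi(t,r)-\rho_0(r))^2 g_{\alpha,\beta}^{-1}(r)\dd r$.

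The chain of identities above is routine; the real obstacle is making the manipulations rigorous — ensuring that $\xi(t,\cdot)$ is regular enough and decays sufficiently fast at both ends of $(0,\infty)$ for $\mathcal H(t)$ to be finite and differentiable and for the integrations by parts (both the one conserving mass and the one producing the dissipation identity) to leave no boundary contribution. This is the usual difficulty caused by the degeneracy of the diffusion at $r=0$ and the unboundedness of the drift and diffusion coefficients. The cleanest route is to prove the decay first for solutions living in the weighted Sobolev space $X_{\alpha,\beta}$ introduced above — where the functional setting legitimizes these computations by density — and then to appeal to well-posedness and uniqueness in that space, exactly as Lemma~\ref{lem:classic_rho} was obtained from Lemma~\ref{p:classicalrho}.
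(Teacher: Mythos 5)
Your proof is correct and follows essentially the same route as the paper's: rewrite the operator in the $\rho_0$-weighted divergence form, differentiate the weighted $L^2$ functional $\mathcal H(t)$, integrate by parts to obtain the dissipation identity, apply the weighted Poincar\'e inequality \eqref{eq:weightPoincare_IG_2} to the quotient $h=(\xi-\rho_0)/g_{1+\frac{k_r}{\sigma_r},\frac{c_r}{\sigma_r}}$, and conclude by Gr\"onwall. You are in fact more explicit than the paper about the zero-mean condition (from mass conservation) needed to invoke the Poincar\'e inequality and about the vanishing of the boundary terms.
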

    \begin{proof}
       Observe that $\xi-\rho_0$ solves the unsteady Fokker-Planck equation, so that by multiplying the equation by $(\xi-\rho_0)\,g_{1+\frac{k_r}{\sigma_r}, \frac{c_r}{\sigma_r}}^{-1}$ and integrating in $r$ one gets
       \begin{multline*}
       \frac{\dd}{\dd t} \int_0^\infty(\xi(t,r)-\rho_0(r))^2\,g_{1+\frac{k_r}{\sigma_r}, \frac{c_r}{\sigma_r}}^{-1}(r)\dd r \\ \,+\, \int_0^\infty\left|\partial_r\left(\frac{\xi(t,\cdot)-\rho_0}{g_{1+\frac{k_r}{\sigma_r}, \frac{c_r}{\sigma_r}}}\right)(r)\right|^2\,g_{1+\frac{k_r}{\sigma_r}, \frac{c_r}{\sigma_r}}(r)\,r^2\,\dd r\ =\ 0\,.
       \end{multline*}
       Then by using the Poincaré inequality \eqref{eq:weightPoincare_IG_2} and a Gronwall type argument, one gets the result. 
    \end{proof}

    \subsection{Explicit mRNA distribution in the presence of fast $\mu$RNA}
    
    Now we focus on the \Max{solution} 
    of \eqref{eq:fast}. The same arguments \Max{as} 
    those establishing Lemma~\ref{lem:classic_rho} show that the only function $M$ satisfying \eqref{eq:fast} is the following inverse gamma distribution
    \begin{equation}\label{eq:M}
        M(r,\mu)\ =\ g_{1+\frac{k_\mu}{\sigma_\mu}+\frac{c}{\sigma_\mu}r, \frac{c_\mu}{\sigma_\mu}}(\mu)\,.
    \end{equation}
    Then an application of \eqref{eq:firstmoment} yields
    \begin{equation}\label{eq:jfast}
        \jfast(r) = \frac{c_\mu}{k_\mu+cr}\,.
    \end{equation}
    It remains to find $\rhofast$ which is a probability density solving the Fokker-Planck equation 
    \[
    \partial_r\left[\partial_r(\sigma_r\ r^2\ \rhofast) - (\ c_r - \frac{c_\mu\,c\,r}{k_\mu+cr} - k_r\ r)\rhofast  \right] = 0\,.
    \]
    Arguing as in the proof of Lemma~\ref{p:classicalrho}, one observe that integrability properties force $\rhofast$  to actually solve
    \[
    \partial_r(\sigma_r\ r^2\ \rhofast) - (\ c_r - \frac{c_\mu\,c\,r}{k_\mu+cr} - k_r\ r)\rhofast = 0\,.
    \]
    which yields 
    \begin{equation}\label{eq:rhofast}
        \rhofast(r)\ =\ C\,\left(1+\frac{k_\mu}{cr}\right)^{\frac{c\,c_\mu}{\sigma_r\,k_\mu}}\,\frac{1}{r^{2+\frac{k_r}{\sigma_r}}}\,\exp\left(-\frac{c_r}{\sigma_r r}\right)
    \end{equation}
    where $C\equiv C(c_r, c_\mu, k_r, k_\mu, \sigma_r, \sigma_\mu,c)$ is a normalizing constant making $\rhofast$ a probability density function.
    
    \subsection{Well-posedness of the main Fokker-Planck model}
    Now we are interested in the well-posedness of \eqref{eq:FPstat}, for which we cannot derive explicit formulas anymore. Despite the convenient functional framework introduced in Section~\ref{s:exist_without}, classical arguments from elliptic partial differential equation theory do not seem to be adaptable to the case $c>0$. The main obstruction comes from an incompatibility between the natural decay of functions in the space $X_{\alpha,\beta}$ and the rapid growth of the term $c\,r\,\mu$ when $|(r,\mu)|\to \infty$. 
    
    However, thanks to the results of \cite{huang_2015_steady} focused specifically on Fokker-Planck equations, we are able to prove well-posedness of the steady Fokker-Planck equation \eqref{eq:FPstat}. The method is based on finding a Lyapunov function for the adjoint of the Fokker-Planck operator and relies on an integral identity proved by the same authors in \cite{huang_2015_integral}. 
    \Max{The interested reader may also find additional material and a comprehensive exposition concerning the analysis of general Fokker-Planck equations for measures in \cite{bogachev_2015_Fokker}.}

    First of all let us specify the notion of solution. A weak solution to \eqref{eq:FPstat} is an integrable function $f$ such that
    \begin{equation}\label{eq:weakform}
        \left\{
        \begin{array}{ll}
            \ds\int_{\Omega}f(r,\mu)\mathcal{L}\varphi(r,\mu)\ =\ 0\,,&\text{for all}\ \varphi\in\mathcal{C}^\infty_c(\Omega)\,,\\[.75em]
            \ds\int_{\Omega}f(r,\mu) \ =\ 1\,,& f(r,\mu)\ \geq\ 0\,,
        \end{array}
        \right.
    \end{equation}
    where the adjoint operator is given by
    \begin{multline}
        \mathcal{L}\varphi(r,\mu)\ :=\ \sigma_rr^2\partial_{rr}^2\varphi +(c_r - c\,r\,\mu - k_r\,r)\partial_r\varphi\\ + \sigma_\mu\mu^2\partial_{\mu\mu}^2 \varphi +(c_\mu - c\,r\,\mu - k_\mu\,\mu)\partial_\mu \varphi\,.
    \end{multline}
    
    A reformulation and combination of \cite[Theorem A and Proposition 2.1]{huang_2015_steady} provides the following result.
    \begin{prop}[\cite{huang_2015_steady}]\label{p:existLyap}
        Assume that there is a smooth function $U:\Omega\rightarrow[0,+\infty)$, called \emph{Lyapunov function} with respect to $\mathcal{L}$, such that 
        \begin{equation}\label {eq:Lyap1}
            \lim_{(r,\mu)\to\overline{\partial\Omega}}U(r,\mu)\ =\ +\infty\,,
        \end{equation}
        and
        \begin{equation}\label {eq:Lyap2}
            \lim_{(r,\mu)\to\overline{\partial\Omega}}\mathcal{L}U(r,\mu)\ =\ -\infty\,,
        \end{equation}
        where $\overline{\partial\Omega} = \partial\Omega\cup(\{+\infty\}\times\RR_+)\cup(\RR_+\times\{+\infty\})$. Then there is a unique $f$ satisfying \eqref{eq:weakform}. Moreover $f\in W^{1,\infty}_\mathrm{loc}(\Omega)$.
    \end{prop}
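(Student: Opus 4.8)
Although Proposition~\ref{p:existLyap} is borrowed from \cite{huang_2015_steady}, we indicate the structure of the argument. The plan is to obtain existence by an exhaustion–compactness scheme in which the Lyapunov function $U$ supplies the tightness needed to pass to the limit, and to obtain uniqueness from interior regularity together with the irreducibility of the underlying diffusion.

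\emph{Existence.} Set $\Omega_n=\{U<n\}$; by \eqref{eq:Lyap1} these are bounded open subsets of $\Omega$ exhausting it, and for a.e.\ $n$ (Sard's theorem) $\partial\Omega_n=\{U=n\}$ is a smooth hypersurface. On $\Omega_n$ the operator $L$ is uniformly elliptic with smooth coefficients, so the stationary Fokker--Planck problem with the conservative (no-flux) boundary condition admits a unique positive probability density $f_n$, and $\int_{\Omega_n}f_n\,\mathcal L\varphi=0$ for every $\varphi\in\mathcal C^\infty_c(\Omega)$ with $\operatorname{supp}\varphi\subset\Omega_n$. Testing $Lf_n=0$ against $U$ and integrating by parts over $\Omega_n$, the no-flux condition kills the flux term and leaves the integral identity
\[
\int_{\Omega_n}\mathcal L U\,f_n\ =\ \int_{\{U=n\}}f_n\,\frac{(a\nabla U)\!\cdot\!\nabla U}{|\nabla U|}\,\dD S\ \ge\ 0\,,
\]
where $a=\operatorname{diag}(\sigma_rr^2,\sigma_\mu\mu^2)$ is positive definite on $\Omega$ and $f_n\ge 0$; this is the identity of \cite{huang_2015_integral} specialized to the present setting. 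By \eqref{eq:Lyap2}, $\mathcal L U$ is continuous on $\Omega$ and tends to $-\infty$ at $\overline{\partial\Omega}$, hence is bounded above by some $S<\infty$, and $S-\mathcal L U\ge 0$ is coercive in the sense that its sublevel sets are compact subsets of $\Omega$. Combining the identity with $\int_{\Omega_n}f_n=1$ gives $\int_{\Omega_n}(S-\mathcal L U)\,f_n\le S$ uniformly in $n$, so Markov's inequality shows that $\{f_n\}$ is tight.

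\emph{Passage to the limit and regularity.} Extract a subsequence with $f_n\rightharpoonup\pi$ narrowly; tightness makes $\pi$ a probability measure on $\Omega$, and letting $n\to\infty$ in $\int_{\Omega_n}f_n\,\mathcal L\varphi=0$ yields $\int_\Omega\mathcal L\varphi\,\dD\pi=0$ for all $\varphi\in\mathcal C^\infty_c(\Omega)$ — no boundary contribution survives precisely because no mass escapes to $\overline{\partial\Omega}$. Since $L$ has smooth coefficients and is non-degenerate on every compact subset of $\Omega$, it is hypoelliptic, so $\pi$ has a density $f\in C^\infty(\Omega)$; in particular $f\in W^{1,\infty}_{\mathrm{loc}}(\Omega)$ and $f$ solves \eqref{eq:weakform}.

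\emph{Uniqueness, and the main obstacle.} For uniqueness I would use that the diffusion generated by $L$ on the connected open set $\Omega$ is non-explosive (the Lyapunov pair $(U,\mathcal L U)$ prevents reaching $\overline{\partial\Omega}$ in finite time), strong Feller by non-degenerate parabolic regularity, and irreducible (the diffusion matrix $a$ is positive definite on all of $\Omega$, so by the support theorem every nonempty open subset of $\Omega$ is visited with positive probability); the Doob--Khasminskii theorem then gives at most one invariant probability measure, and every solution of \eqref{eq:weakform} is one such measure. Alternatively one follows \cite{huang_2015_steady} and derives uniqueness directly from the integral identity applied to a well-chosen $U$-dependent test function. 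The delicate point throughout is the legitimacy of using the non-compactly-supported function $U$ as a test function against $f_n$ in the presence of the unbounded drift $c\,r\,\mu$: this is exactly what forces the argument through the integral identity of \cite{huang_2015_integral} rather than a direct variational approach in $X_{\alpha,\beta}$, where, as already noted, the growth of $c\,r\,\mu$ is incompatible with the weighted decay available there.
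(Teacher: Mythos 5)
The paper does not prove this proposition at all: it is imported verbatim as ``a reformulation and combination of [Theorem A and Proposition 2.1]'' of \cite{huang_2015_steady}, so there is no in-paper argument to compare yours against. Your sketch is a faithful reconstruction of the strategy of the cited works: exhaustion by sublevel sets of $U$, auxiliary no-flux problems on $\Omega_n$, the level-set integral identity of \cite{huang_2015_integral} to get $\int_{\Omega_n}\mathcal{L}U\,f_n\ \geq\ 0$ and hence tightness from $\int(S-\mathcal{L}U)f_n\leq S$, then hypoellipticity for the density and its regularity. Two points deserve flagging. First, in the uniqueness step via Doob--Khasminskii you implicitly identify every weak solution of \eqref{eq:weakform} (an \emph{infinitesimally} invariant measure) with an invariant measure of the diffusion semigroup; this identification is false in general and is precisely one of the nontrivial contributions of \cite{huang_2015_steady} (and of Bogachev--Krylov--R\"ockner), where it is established under the Lyapunov hypothesis --- or bypassed entirely by an analytic uniqueness argument built on the same integral identity, which is the route the cited paper actually takes. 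Second, the solvability of the no-flux problem on $\Omega_n$ with a positive normalized solution is standard but rests on a Krein--Rutman/Fredholm argument that you should at least name. Neither point invalidates the sketch; both are exactly where the content of the cited theorems lies, which is presumably why the authors chose to cite rather than reprove.
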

\Max{\begin{rema}
     The method of Lyapunov functions is a standard tool for proving well-posedness of many problems in the theory of ordinary differential equations, dynamical systems... For diffusion processes and Fokker-Planck equations its use dates back to Has'minskii \cite{hasminski_1960_ergodic}. We refer to \cite[Chapter 2]{bogachev_2015_Fokker}, \cite{huang_2015_steady} and references therein for further comments on the topic. Let us stress however that  Lyapunov functions are not related (at least directly) to the Lyapunov (or entropy) method for evolution PDEs in which one shows the monotony of a functional to quantify long-time behavior.
     \end{rema}
}
    
    \Max{\begin{rema}
          Thanks of the degeneracy of the diffusivities at $r=0$ and $\mu=0$ and the Lyapunov function condition, one doesn't need supplementary boundary conditions in \eqref{eq:weakform} for the problem to have a unique solution. This is different from standard elliptic theory where boundary conditions are necessary to define a unique solution when the domain and the coefficients are bounded with uniformly elliptic diffusivities. Further comments may be found in \cite{huang_2015_steady}.
         \end{rema}}
    
    \begin{lem}\label{l:lyapFP}
        Choose any two constants $b_r > \frac{c}{k_\mu}$ and $b_\mu > \frac{c}{k_r}$. Then, the function $U:\Omega\rightarrow\RR$ defined by 
        \[
        U(r,\mu)\ =\ b_rr-\ln(b_rr)+b_\mu\mu-\ln(b_\mu\mu)
        \]
        is a Lyapunov function with respect to $\mathcal{L}$ (\emph{i.e.} it is positive on $\Omega$ and it satisfies \eqref{eq:Lyap1} and \eqref{eq:Lyap2}).
    \end{lem}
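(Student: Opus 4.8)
The plan is to check the three defining properties of a Lyapunov function in turn: strict positivity of $U$ on $\Omega$, the coercivity \eqref{eq:Lyap1}, and the dissipation \eqref{eq:Lyap2}. It is convenient to write $U(r,\mu)=\phi(b_r r)+\phi(b_\mu\mu)$ with $\phi(x)=x-\ln x$ on $(0,\infty)$. Since $\phi$ is strictly convex with global minimum at $x=1$, one has $\phi\geq\phi(1)=1$, hence $U\geq 2>0$ on $\Omega$, which settles positivity; moreover $\phi(x)\to+\infty$ both as $x\to 0^+$ and as $x\to+\infty$. If $(r,\mu)\to\overline{\partial\Omega}$, then at least one of $b_r r$, $b_\mu\mu$ tends to $0$ or to $+\infty$, so the corresponding summand of $U$ tends to $+\infty$ while the other stays $\geq 1$; this gives \eqref{eq:Lyap1}.

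For \eqref{eq:Lyap2} I would compute $\mathcal{L}U$ explicitly. Because $U$ splits as a function of $r$ plus a function of $\mu$, we have $\partial_r U=b_r-1/r$, $\partial_{rr}^2U=1/r^2$, and symmetrically in $\mu$; in particular the second-order part of $\mathcal{L}$ contributes only the constant $\sigma_r+\sigma_\mu$. Expanding the two first-order products $(c_r-c\,r\,\mu-k_r r)(b_r-1/r)$ and $(c_\mu-c\,r\,\mu-k_\mu\mu)(b_\mu-1/\mu)$ and collecting terms, one obtains
\[
  \mathcal{L}U(r,\mu)\ =\ A\ -\ \frac{c_r}{r}\ -\ \frac{c_\mu}{\mu}\ -\ (k_r b_r-c)\,r\ -\ (k_\mu b_\mu-c)\,\mu\ -\ c\,(b_r+b_\mu)\,r\,\mu\,,
\]
where $A=\sigma_r+\sigma_\mu+k_r+k_\mu+b_r c_r+b_\mu c_\mu$ is a constant independent of $(r,\mu)$. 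The role of the hypotheses on $b_r$ and $b_\mu$ is precisely to make the coefficients of the linear terms in $r$ and in $\mu$ strictly positive; together with $c_r,c_\mu>0$ and $c(b_r+b_\mu)\geq 0$, this exhibits $\mathcal{L}U-A$ as a sum of five terms, each $\leq 0$ on all of $\Omega$.

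To conclude, note that a point $(r,\mu)$ approaching $\overline{\partial\Omega}$ forces at least one of $r\to 0^+$, $\mu\to 0^+$, $r\to+\infty$, $\mu\to+\infty$ to occur; in each of these four cases the matching term among $-c_r/r$, $-c_\mu/\mu$, $-(k_r b_r-c)r$, $-(k_\mu b_\mu-c)\mu$ tends to $-\infty$, while all the remaining terms (including the mixed term $-c(b_r+b_\mu)r\mu$) stay $\leq 0$; hence $\mathcal{L}U\to-\infty$, which is \eqref{eq:Lyap2}, and $U$ is a Lyapunov function with respect to $\mathcal{L}$. The only genuinely substantive step is the elementary computation of $\mathcal{L}U$ and the observation that every non-constant contribution then carries a favourable sign — in particular the binding cross term $-c\,r\,\mu$ comes for free, and the thresholds imposed on $b_r,b_\mu$ are needed only to absorb the two linear terms $+c\,\mu$ and $+c\,r$ produced by differentiating the logarithms against the binding drift. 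Once this sign structure is available, passing to the limit along any sequence leaving every compact subset of $\Omega$ is immediate, with no compactness or regularity issues to address.
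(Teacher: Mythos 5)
Your proof is correct and follows essentially the same route as the paper: the explicit computation of $\mathcal{L}U$ agrees term by term with the one given there, and the sign analysis (each non-constant term is $\leq 0$, with at least one tending to $-\infty$ near $\overline{\partial\Omega}$) is the same, just spelled out in more detail. One small point worth flagging: the negativity of the linear coefficients requires $b_r k_r > c$ and $b_\mu k_\mu > c$, i.e.\ $b_r > c/k_r$ and $b_\mu > c/k_\mu$, whereas the lemma as stated has the indices crossed ($b_r > c/k_\mu$, $b_\mu > c/k_r$) --- an apparent typo in the statement that both your argument and the paper's own proof silently correct.
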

    \begin{proof}
        First observe that condition \eqref{eq:Lyap1} is clearly satisfied. Also, $U$ is minimal at $(b_r^{-1}, b_\mu^{-1})$ where it takes the value $2$ and thus it is positive on $\Omega$.  Finally a direct computation yields
        \begin{multline*}
            \mathcal{L}U(r,\mu)\ =\ (\sigma_r+\sigma_\mu+b_rc_r+b_\mu c_\mu+k_r+k_\mu) \\-\frac{c_r}{r}-\frac{c_\mu}{\mu} -(b_rk_r-c)r-(b_\mu k_\mu-c)\mu-c r\mu(b_r+b_\mu)\,,
        \end{multline*}
        and \eqref{eq:Lyap2} follows.
    \end{proof}
    \Max{Now we state our well-posedness result for the Fokker-Planck equation \eqref{eq:weakform}.}
    \begin{prop}
        There is a unique weak solution $f$ to the steady Fokker-Planck equation \eqref{eq:FPstat}. Moreover, \Max{$f$ is indefinitely differentiable in $\Omega$}.
    \end{prop}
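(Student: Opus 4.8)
The statement is a short corollary of the two preceding results: existence and uniqueness follow by feeding the Lyapunov function of Lemma~\ref{l:lyapFP} into Proposition~\ref{p:existLyap}, while the $\mathcal{C}^\infty$ regularity is obtained by a routine interior elliptic bootstrap, using that $L$ is locally uniformly elliptic with smooth coefficients inside the open quadrant $\Omega$.

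Concretely, I would first fix constants $b_r>c/k_\mu$ and $b_\mu>c/k_r$ (possible since $k_r,k_\mu>0$, and any positive values work when $c=0$), so that Lemma~\ref{l:lyapFP} provides a smooth $U:\Omega\to[0,\infty)$ satisfying \eqref{eq:Lyap1}--\eqref{eq:Lyap2}. All hypotheses of Proposition~\ref{p:existLyap} are then met, which yields a unique $f$ solving \eqref{eq:weakform} with $f\in W^{1,\infty}_\mathrm{loc}(\Omega)$. Note that the weak formulation \eqref{eq:weakform} is exactly the statement that $f\geq0$ is a probability density with $Lf=0$ in $\mathcal{D}'(\Omega)$, since $\mathcal{L}$ is the formal adjoint of the divergence-form operator $L$ in \eqref{eq:defFP}; one should just check in passing that this is the same notion of solution as in \cite{huang_2015_steady}.

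For the smoothness, I would work on an arbitrary open ball $B$ with $\overline{B}\subset\Omega$. There $r$ and $\mu$ are bounded above and away from $0$, so the principal coefficients $\sigma_rr^2$, $\sigma_\mu\mu^2$ are bounded between two positive constants and all coefficients of $L$ are polynomials, hence in $\mathcal{C}^\infty(\overline B)$; thus $L$ is uniformly elliptic on $B$ with smooth coefficients. Since $f\in L^2_\mathrm{loc}(\Omega)$ solves $Lf=0$ in $\mathcal{D}'(\Omega)$, hypoellipticity of such operators --- equivalently, iterating the interior $H^k$ estimates and using Sobolev embedding --- gives $f\in\mathcal{C}^\infty(B)$, and $B$ being arbitrary, $f\in\mathcal{C}^\infty(\Omega)$. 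This produces regularity only inside $\Omega$, which is all that is claimed: the diffusion matrix degenerates on $\partial\Omega$, and, as already observed, no boundary condition (hence no boundary regularity) is needed. The only mildly delicate point in the whole argument is the bookkeeping already carried out in Lemma~\ref{l:lyapFP}, namely verifying that $\mathcal{L}U\to-\infty$ on $\overline{\partial\Omega}$ for the prescribed range of $b_r,b_\mu$, the terms $-c_r/r$ and $-c_\mu/\mu$ controlling the finite part of the boundary and the linear and quadratic terms in $r,\mu$ controlling the behaviour at infinity; beyond that there is no substantial analytic obstacle, the hard work residing in \cite{huang_2015_steady,huang_2015_integral}.
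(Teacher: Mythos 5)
Your argument is correct and is essentially identical to the paper's own proof: existence and uniqueness come from combining Proposition~\ref{p:existLyap} with the Lyapunov function of Lemma~\ref{l:lyapFP}, and $\mathcal{C}^\infty$ regularity follows from interior elliptic regularity on compact subsets of $\Omega$, where the operator is uniformly elliptic with smooth coefficients. The extra details you supply (checking the notion of weak solution matches \cite{huang_2015_steady}, the bootstrap via $H^k$ estimates) are sound elaborations of the same route.
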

    \Max{\begin{proof}
     The existence and uniqueness of a solution $f\in W^{1,\infty}_\mathrm{loc}(\Omega)$ is a combination of Proposition~\ref{p:existLyap} and Lemma~\ref{l:lyapFP}. From there in any smooth compact subdomain  $K\subset\subset\Omega$, we get from standard elliptic theory \cite{gilbarg_2001_elliptic} that $f\in\mathcal{C}^\infty(K)$, since the coefficents are smooth and the operator is uniformly elliptic.
    \end{proof}}

    \section{Noise reduction by binding : the case of fast $\mu$RNA}\label{sec:noise_fast}
    
    In this section we focus on the comparison between the explicit distributions \eqref{eq:rho0} and \eqref{eq:rhofast}. We are providing theoretical and numerical \Max{evidence}   that the coefficient of variation (which is a normalized standard deviation) of \eqref{eq:rhofast} is less than that of \eqref{eq:rho0}. This quantity called cell to cell variation in the biological literature \cite{osella_2011_role} characterizes the robustness of the gene expression level (the lower the better). We start by performing a rescaling in order to extract the dimensionless parameters which characterize the distributions.
    \subsection{Dimensional analysis}
    In order to identify the parameters of importance in the models, we rescale the variable\Max{s} $r$ \Max{and $\mu$} around characteristic value\Max{s} $\bar{r}$ \Max{and $\bar{\mu}$} chosen to be 
    \begin{equation}\label{eq:rcharac}
        \bar{r} = \frac{c_r}{k_r}\Max{\quad \text{and} \quad \bar{\mu}\ =\ \frac{c_\mu}{k_\mu}}\,.
    \end{equation}
    Th\Max{ese} choice\Max{s are}  natural in the sense that \Max{they} correspond to the steady state\Max{s} of the mRNA \Max{and $\mu$RNA}  dynamics without binding nor stochastic effects, that is \Max{respectively}  $\dd r_t = (c_r - k_r\ r_t)\dd t$ \Max{and $\dd \mu_t = (c_\mu - k_\mu\ \mu_t)\dd t$ }. \Max{When the noise term is added, it still corresponds to the expectation of the invariant distribution, that is the first moment of $\rho_0$ in the case of mRNA.} 
    \Max{We introduce $f^\text{ad}$ such that for all $(r,\mu)\in\Omega$ one has
    \[
    \frac{1}{\bar{r}\bar{\mu}}\,f^\text{ad}\left(\frac{r}{\bar{r}}, \frac{\mu}{\bar{\mu}}\right)\ =\ f(r,\mu)\,.
    \]
    After some computations one obtains that the Fokker-Planck equation \eqref{eq:FPstat}-\eqref{eq:defFP} can be rewritten in terms of $f^\text{ad}$ as
     \begin{multline}\label{eq:FPtrunc_adim}
            \ds\partial_r\left[\delta\,(1 - \gamma\,p\,r\,\mu - r)f^\text{ad} - \partial_r(r^2f^\text{ad})\right]\\+\ \partial_\mu\left[\delta\kappa(1 - \gamma\,r\,\mu - \mu)f^\text{ad} - \nu\,\partial_\mu(\mu^2f^\text{ad})\right]\ =\ 0\,,
    \end{multline}}    
    \Max{The marginal distributions} $\rho_0$ and $\rhofast$ are rescaled into dimensionless densities 
    \begin{align}
        \rho_0^{\delta}(r)\ &=\ g_{1+\delta, \delta}(r)\ =\ C_0^\text{ad}\,\frac{1}{r^{2+\delta}}\,\exp\left(-\frac{\delta}{r}\right)\label{eq:rho0adim}\\[.75em]
        \rhofast^{\delta,\gamma,p}(r)\ &=\ C_\text{fast}^\text{ad}\,\left(1+\frac{1}{\gamma r}\right)^{\gamma\,p\,\delta}\frac{1}{r^{2+\delta}}\,\exp\left(-\frac{\delta}{r}\right)\label{eq:rhofastadim}
    \end{align}
    where $C_0^\text{ad}$ and $C_\text{fast}^\text{ad}$ are normalizing constants depending on the parameters of the model and $\delta$, $p$ and $\gamma$ are dimensionless parameters. The first parameter
    \begin{equation}\label{eq:delta}
        \delta\ =\ \frac{k_r}{\sigma_r}\,,
    \end{equation}
    only depends on constants that are independent of the dynamics of $\mu$RNAs. The two other dimensionless parameters \Max{appearing in the marginal distribution of mRNA in the presence of fast $\mu$RNA} are
    \begin{equation}\label{eq:p}
        p\ =\ \frac{c_\mu}{c_r}\,,
    \end{equation}
    and
    \begin{equation}\label{eq:gamma_param}
        \gamma\ =\ \frac{c\,\bar{r}}{k_\mu}\ =\ \frac{c\,c_r}{k_\mu\,k_r}\,.
    \end{equation}
    Let us give some insight into the biological meaning of these parameters.
    The parameter $\gamma$ measures the relative importance of the two mechanisms of destruction of $\mu$RNAs, namely the binding with mRNAs
    versus the natural destruction/consumption. A large $\gamma$ means that the binding effect is strong and conversely. The parameter $p$ compares the production rate of $\mu$RNAs with that of mRNAs. Large values of $p$ mean that there are much more $\mu$RNAs than mRNAs produced per unit of time.
    
    \Max{Finally, in the Fokker-Planck model \eqref{eq:FPtrunc_adim}, there are also two other parameters which are 
        \begin{equation}\label{eq:kappa}
    \kappa\ =\ \frac{k_\mu}{k_r}\,,
    \end{equation}
    and 
    \begin{equation}\label{eq:nu}
    \nu\ =\ \frac{\sigma_\mu}{\sigma_r}\,.
    \end{equation}
    The parameter $\kappa$ compares  consumption of $\mu$RNA versus that of mRNA by mechanisms which are not the binding between the two RNAs. The parameter $\nu$ compares the amplitude of the noise in the dynamics of $\mu$RNA versus that of the mRNA.
    
    \begin{rema}
    Observe that the approximation of fast $\mu$RNA leading to the model discussed in Section~\ref{sec:fast} in its dimensionless form amounts to taking $\nu = \kappa = 1/\varepsilon$ and letting $\varepsilon$ tend to $0$.
    \end{rema}}

    \subsection{Cell to cell variation (CV)}
    For any suitably integrable non-negative function $\nu$, let us denote by 
    \[
    m_k(\nu)\ =\ \int y^k\,\nu(y)\dd y 
    \]
    its $k$-th moment. The coefficient of variation or cell to cell variation (CV) is defined by
    \begin{equation}\label{eq:def_CV}
        \mathrm{CV}(\nu)\ =\ \frac{\mathrm{Var}(\nu/m_0(\nu))^{1/2}}{\mathrm{Exp}(\nu/m_0(\nu))}\ =\ \left(\frac{m_2(\nu)m_0(\nu)}{m_1(\nu)^2}-1\right)^{1/2}
    \end{equation}
    where $\mathrm{Exp}(\cdot)$ and $\mathrm{Var}(\cdot)$ denote the expectation and variance. 
    \Max{Let us state a first lemma concerning some cases where the coefficient of variation can be computed exactly. } 
    \begin{lem}\label{lem:asymp}
        Consider the dimensionless distributions defined in \eqref{eq:rho0adim} and \eqref{eq:rhofastadim}. Then one has that
        \begin{equation}\label{eq:exp_var_cv}
            \mathrm{Exp}(\rho_0^{\delta})\ =\ 1\,,\quad \mathrm{Var}(\rho_0^{\delta})\ =\ \frac{1}{\delta-1}\,, \quad \mathrm{CV}(\rho_0^{\delta})\ =\ \frac{1}{\sqrt{\delta-1}}\,,
        \end{equation}
        where the variance and coefficient of variation are well-defined only for $\delta>1$. 
        Then for any $\delta > 1$, the following limits holds
        \[
        \begin{array}{rcl}
         \ds\lim_{\gamma\to0}\mathrm{CV}(\rhofast^{\delta,\gamma,p})& =& \mathrm{CV}(\rho_0^{\delta})\,,\quad \forall p>0\,,\\
         \ds\lim_{p\to0}\mathrm{CV}(\rhofast^{\delta,\gamma,p})& =& \mathrm{CV}(\rho_0^{\delta})\,,\quad \forall \gamma>0\,,\\
         \ds\lim_{\gamma\to\infty}\mathrm{CV}(\rhofast^{\delta,\gamma,p})& =& \mathrm{CV}(\rho_0^{\delta})\,,\quad \forall p\in[0,1)\,.
         \end{array}
        \]

    \end{lem}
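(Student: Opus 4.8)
The plan is to reduce the statement to a few elementary computations with inverse gamma moments together with three dominated-convergence arguments. The first identities in \eqref{eq:exp_var_cv} are immediate: $\rho_0^\delta = g_{1+\delta,\delta}$ is an inverse gamma density of shape $\alpha = 1+\delta$ and scale $\beta = \delta$, so \eqref{eq:firstmoment} gives $\mathrm{Exp}(\rho_0^\delta) = \delta/\delta = 1$ and, for $\delta>1$, \eqref{eq:secondmoment} gives $m_2(\rho_0^\delta) = \delta^2/(\delta(\delta-1)) = \delta/(\delta-1)$, whence $\mathrm{Var}(\rho_0^\delta) = 1/(\delta-1)$ and $\mathrm{CV}(\rho_0^\delta) = 1/\sqrt{\delta-1}$. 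For the three limits I would first note that, by \eqref{eq:def_CV}, the coefficient of variation of a density depends only on the scale-invariant ratio $m_2m_0/m_1^2$, so normalizing constants are irrelevant. Setting, for $k\in\{0,1,2\}$,
\[
\phi_{\gamma,p}(r)\ :=\ \Big(1+\tfrac{1}{\gamma r}\Big)^{\gamma p\delta}\,,\qquad I_k(\gamma,p)\ :=\ \int_0^\infty r^{k-2-\delta}\,\phi_{\gamma,p}(r)\,e^{-\delta/r}\,\dd r\,,
\]
one has $\mathrm{CV}(\rhofast^{\delta,\gamma,p}) = (I_2I_0/I_1^2-1)^{1/2}$ from \eqref{eq:rhofastadim}, and likewise $\mathrm{CV}(\rho_0^\delta) = (J_2J_0/J_1^2-1)^{1/2}$ with $J_k := \int_0^\infty r^{k-2-\delta}e^{-\delta/r}\dd r$. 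The substitution $u=1/r$ gives $J_k = \Gamma(\delta+1-k)/\delta^{\delta+1-k}$, and $\Gamma(\delta+1)=\delta\Gamma(\delta)$, $\Gamma(\delta)=(\delta-1)\Gamma(\delta-1)$ then give $J_2J_0/J_1^2 = \delta/(\delta-1)$, consistent with the direct computation. So the whole task is to identify $\lim I_k(\gamma,p)$ in each regime.

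I would treat $\gamma\to0$ (with $p>0$ fixed) and $p\to0$ (with $\gamma>0$ fixed) in one stroke. A short computation shows that $\gamma\mapsto\phi_{\gamma,p}(r)$ and $p\mapsto\phi_{\gamma,p}(r)$ are both nondecreasing---for the first this uses the elementary inequality $\ln(1+x)>x/(1+x)$ on $(0,\infty)$---and that $\phi_{\gamma,p}(r)\to1$ pointwise as $\gamma\to0$ (because $\gamma\ln\gamma\to0$) and as $p\to0$ (the exponent tends to $0$). Hence, for $\gamma$ bounded above by a fixed $\gamma_0$ (resp. $p$ bounded above by a fixed $p_0$), the integrand of $I_k$ is dominated by $r^{k-2-\delta}\phi_{\gamma_0,p}(r)e^{-\delta/r}$ (resp. $r^{k-2-\delta}\phi_{\gamma,p_0}(r)e^{-\delta/r}$), which is integrable on $(0,\infty)$: at $r=0$ the polynomial-order blow-up of $\phi$ is crushed by $e^{-\delta/r}$, and at $r=\infty$ the integrand behaves like $r^{k-2-\delta}$, integrable exactly when $\delta>k-1$, hence when $\delta>1$ for $k\le2$. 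Dominated convergence gives $I_k(\gamma,p)\to J_k$, hence $\mathrm{CV}(\rhofast^{\delta,\gamma,p})\to\mathrm{CV}(\rho_0^\delta)$ in both cases (the value $p=0$ being trivial since then $\rhofast^{\delta,\gamma,0}=\rho_0^\delta$).

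The limit $\gamma\to\infty$ with $p\in[0,1)$ is the one I expect to be the main obstacle, precisely because here the integrand does \emph{not} converge to the one defining $\rho_0^\delta$: from $(1+\tfrac1{\gamma r})^{\gamma r}\to e$ one gets $\phi_{\gamma,p}(r)\to e^{p\delta/r}$ pointwise, so one expects $I_k(\gamma,p)\to\tilde{J}_k := \int_0^\infty r^{k-2-\delta}e^{-(1-p)\delta/r}\dd r$. To justify the passage to the limit I would use $\ln(1+y)\le y$, which yields $\phi_{\gamma,p}(r)\le e^{p\delta/r}$ uniformly in $\gamma$, so the integrand is dominated by $r^{k-2-\delta}e^{-(1-p)\delta/r}$; this dominating function is integrable \emph{exactly} when $p<1$ (this is where the hypothesis $p\in[0,1)$ enters, providing super-exponential decay at $r=0$) together with $\delta>1$ for the tail at $r=\infty$. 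Dominated convergence then gives $I_k(\gamma,p)\to\tilde{J}_k$. Finally $u=1/r$ gives $\tilde{J}_k = \Gamma(\delta+1-k)/((1-p)\delta)^{\delta+1-k}$, and since the common factor $(1-p)\delta$ cancels in the ratio, $\tilde{J}_2\tilde{J}_0/\tilde{J}_1^2 = \Gamma(\delta-1)\Gamma(\delta+1)/\Gamma(\delta)^2 = \delta/(\delta-1)$, exactly as for $J_k$. Hence $\mathrm{CV}(\rhofast^{\delta,\gamma,p})\to(\delta/(\delta-1)-1)^{1/2}=1/\sqrt{\delta-1}=\mathrm{CV}(\rho_0^\delta)$, which concludes the argument.
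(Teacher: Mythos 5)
Your proposal is correct and follows essentially the same route as the paper: the moment identities from \eqref{eq:firstmoment}--\eqref{eq:secondmoment}, pointwise convergence of the (unnormalized) densities in each regime, and passage to the limit in the moment integrals by dominated convergence. The only difference is one of completeness: you supply the explicit dominating functions (monotonicity of $\phi_{\gamma,p}$ in $\gamma$ and $p$ for the first two limits, and the bound $\phi_{\gamma,p}(r)\le e^{p\delta/r}$ for $\gamma\to\infty$, which is precisely where $p<1$ is needed) and evaluate the limiting ratios via Gamma functions, whereas the paper leaves these verifications implicit.
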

    \begin{proof}
        The formulas for the moments follow from \eqref{eq:firstmoment} and  \eqref{eq:secondmoment}. Then observe that for all $r$, one has
        \[
        \lim_{\gamma\to0}\rhofast^{\delta,\gamma,p}(r)\ =\ \lim_{p\to0}\rhofast^{\delta,\gamma,p}(r)\ =\rho_0^{\delta}(r)
        \]
        and
        \[
        \lim_{\gamma\to+\infty}\rhofast^{\delta,\gamma,p}(r)\ =\ g_{1+\delta, (1-p)\delta}(r)
        \]
        and one can then take limits in integrals by dominated convergence.
    \end{proof}
    
    \Max{Let us give a biological interpretation of the previous lemma. When $\gamma=0$ or $p=0$, which respectively corresponds to the cases where there is no binding between mRNA and $\mu$RNA or there is no production of $\mu$RNA, the coefficient of variation is unchanged from the case of free mRNAs. The last limit states that if the $\mu$RNA production is weaker than the mRNA production, then in the regime where all $\mu$RNA is consumed by binding with mRNA, the coefficient of variation is also unchanged.}

    \Max{Outside of these asymptotic regimes, the theoretical result one would like to have is the following.
    \begin{conj}\label{conj}
     For any $\delta >1$, $\gamma,p>0$ and one has $\mathrm{CV}(\rhofast^{\delta,\gamma,p})\ \leq\ \mathrm{CV}(\rho_0^{\delta})$.
    \end{conj}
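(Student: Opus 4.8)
The plan is to reduce the estimate to one application of the Cauchy--Schwarz inequality, using the inverse-gamma structure of the two densities through a single integration by parts. Since $\rho_0^\delta$ and $\rhofast^{\delta,\gamma,p}$ are probability densities, the definition \eqref{eq:def_CV} together with the identity $\mathrm{CV}(\rho_0^\delta)^2=1/(\delta-1)$ from Lemma~\ref{lem:asymp} shows that Conjecture~\ref{conj} is equivalent to $m_2(\rhofast^{\delta,\gamma,p})\,m_1(\rhofast^{\delta,\gamma,p})^{-2}\leq \delta/(\delta-1)$. First I would perform the change of variables $y=1/r$ in the moments of \eqref{eq:rhofastadim}. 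With $h(y)=(1+y/\gamma)^{\gamma p\delta}$ and $I_s=\int_0^\infty h(y)\,y^s\,e^{-\delta y}\dd y$, one obtains $m_k(\rhofast^{\delta,\gamma,p})=I_{\delta-k}/I_\delta$ for $k=0,1,2$; all three integrals converge precisely because $\delta>1$, which is also what makes the coefficient of variation finite. The inequality to establish then becomes
\[
(\delta-1)\,I_{\delta-2}\,I_\delta\ \leq\ \delta\,I_{\delta-1}^2\,.
\]

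The key step is an integration-by-parts identity for the gamma-type weight $y^s e^{-\delta y}$. Since $h'(y)=\frac{\gamma p\delta}{\gamma+y}\,h(y)$, integrating $\big(h(y)\,y^s\,e^{-\delta y}\big)'$ over $(0,\infty)$ — the boundary terms vanish as soon as $s>0$ — yields, writing $a=\gamma p\delta$ and $J_s=\int_0^\infty \frac{h(y)}{\gamma+y}\,y^s\,e^{-\delta y}\dd y$,
\[
\delta\,I_s\ =\ s\,I_{s-1}\ +\ a\,J_s\,.
\]
Applying this at $s=\delta$ and at $s=\delta-1$ gives $\delta(I_\delta-I_{\delta-1})=a\,J_\delta$ and $(\delta-1)\,I_{\delta-2}=\delta\,I_{\delta-1}-a\,J_{\delta-1}$. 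Substituting both relations into $(\delta-1)\,I_{\delta-2}\,I_\delta\leq\delta\,I_{\delta-1}^2$ and simplifying (the factor $a$ cancels), the whole statement collapses to
\[
I_{\delta-1}\,J_\delta\ \leq\ I_\delta\,J_{\delta-1}\,.
\]

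To finish, I would introduce the finite positive measure $\dd\lambda(y)=h(y)\,y^{\delta-1}\,e^{-\delta y}\dd y$, so that $I_{\delta-1}=\int\dd\lambda$, $I_\delta=\int y\,\dd\lambda$, $J_{\delta-1}=\int(\gamma+y)^{-1}\dd\lambda$ and $J_\delta=\int y(\gamma+y)^{-1}\dd\lambda$. Using $y/(\gamma+y)=1-\gamma/(\gamma+y)$, the inequality $I_{\delta-1}J_\delta\leq I_\delta J_{\delta-1}$ is readily seen to be equivalent to
\[
\left(\int\dd\lambda\right)^{2}\ \leq\ \left(\int(\gamma+y)\,\dd\lambda\right)\left(\int\frac{1}{\gamma+y}\,\dd\lambda\right),
\]
which is Cauchy--Schwarz applied to $\sqrt{\gamma+y}$ and $1/\sqrt{\gamma+y}$. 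Moreover equality would force $\gamma+y$ to be constant on the support of $\lambda$, which is impossible, so in fact $\mathrm{CV}(\rhofast^{\delta,\gamma,p})<\mathrm{CV}(\rho_0^\delta)$ whenever $\gamma,p>0$ and $\delta>1$ — consistent with the three limiting equalities of Lemma~\ref{lem:asymp}.

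The one genuinely delicate point is spotting this particular combination of integration-by-parts identities. More hands-on approaches — for example trying to show that the coefficient of variation decreases monotonically along the family of densities obtained by raising the factor $(1+1/(\gamma r))^{\gamma p\delta}$ to a power growing from $0$ to $1$ — run into intermediate covariance inequalities that are actually false for general reference measures, so the inverse-gamma structure must be exploited in an essential way rather than merely for bookkeeping. Once the identity $\delta\,I_s=s\,I_{s-1}+a\,J_s$ is available, the change of variables, the algebra reducing the claim to $I_{\delta-1}J_\delta\leq I_\delta J_{\delta-1}$, and the concluding Cauchy--Schwarz step are all routine.
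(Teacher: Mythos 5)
Your argument is correct, and it settles a statement that the paper itself does not prove: Conjecture~\ref{conj} is explicitly left open there, the authors only establishing the limiting equalities of Lemma~\ref{lem:asymp}, the weaker uniform bound \eqref{eq:bestboundyet} via Pr\'ekopa--Leindler (and only for $\delta>2$, with a constant $C_\delta>\mathrm{CV}(\rho_0^\delta)$), and --- per the remark following the conjecture --- an unreported integration-by-parts argument valid only for restricted subsets of $(\gamma,p)$. I verified each step of your proof. The substitution $y=1/r$ does give $m_k(\rhofast^{\delta,\gamma,p})\propto I_{\delta-k}$, so the claim reduces to $(\delta-1)I_{\delta-2}I_\delta\leq\delta I_{\delta-1}^2$. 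The identity $\delta I_s=sI_{s-1}+aJ_s$ holds for all $s>0$ because $h'=\frac{a}{\gamma+y}h$ and the boundary terms of $h(y)y^se^{-\delta y}$ vanish; the hypothesis $\delta>1$ enters exactly where you say it does, namely at $s=\delta-1$ and in the convergence of $I_{\delta-2}$ (the other two integrals converge for all $\delta>0$, a cosmetic slip in your write-up). Eliminating $(\delta-1)I_{\delta-2}$ and $\delta(I_\delta-I_{\delta-1})$ via the two instances of the recurrence correctly collapses the claim to $I_{\delta-1}J_\delta\leq I_\delta J_{\delta-1}$, which is the Chebyshev covariance inequality for the increasing function $y$ against the decreasing function $(\gamma+y)^{-1}$ with respect to $\dd\lambda$; your Cauchy--Schwarz derivation of it after writing $y/(\gamma+y)=1-\gamma/(\gamma+y)$ is valid, all integrals involved being finite. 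The strictness conclusion is also right and consistent with the asymptotic equalities of Lemma~\ref{lem:asymp}. The key difference from the paper is structural: the authors' own integration-by-parts attempt apparently used the moment recurrence in a way that forced parameter restrictions, whereas combining the recurrence at \emph{both} $s=\delta$ and $s=\delta-1$ makes the factor $a=\gamma p\delta$ cancel entirely and leaves a parameter-free correlation inequality; this is what buys the full range $\delta>1$, $\gamma,p>0$ and, unlike the Pr\'ekopa--Leindler bound \eqref{eq:bestboundyet}, the sharp comparison with $\mathrm{CV}(\rho_0^{\delta})$ itself. I also confirmed the chain of identities numerically at $\delta=2$, $\gamma=p=1$ (where $I_0I_2/I_1^2=140/81<2$), so this should be written up as a proposition replacing the conjecture.
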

    At the moment, we are able to obtain the following uniform in $\gamma$ and $p$ bound
 \begin{equation}\label{eq:bestboundyet}
            \mathrm{CV}(\rhofast^{\delta,\gamma,p})\ \leq\ C_\delta\ \Max{:=}\ \left(\left(\frac{\delta}{\delta-1}\right)^2\left(1-\frac{1}{(\delta-1)^2}\right)^{{\delta-2}}-1\right)^{\frac12}\,,
        \end{equation}
        which holds for all $\delta>2$, $\gamma>0$ and $p\geq0$. The result is proved in  Proposition~\ref{prop:CV} in the Appendix. Observe that $C_\delta\geq \mathrm{CV}(\rho_0^{\delta})$ but asymptotically 
        \[
        C_\delta\sim_{\delta\to\infty}\mathrm{CV}(\rho_0^{\delta})\ =\ \frac{1}{\sqrt{\delta-1}}\,,
        \]
        so that $C_\delta$ is fairly close to  $\mathrm{CV}(\rho_0^{\delta})$ for large $\delta$.
        }
     In the next section we provide numerical \Max{evidence} that it should be possible to improve \Max{the right-hand side of} \eqref{eq:bestboundyet} \Max{and prove Conjecture~\ref{conj}}. \Max{Let us also mention that using integration by parts formulas it is possible to establish a recurrence relation between moments. From there one can infer the inequality of Conjecture~\ref{conj} for subsets of parameters $(\gamma, p)$. As the limitation to these subsets is purely technical and do not have any particular biological interpretation we do not report these results here.}

    \subsection{Exploration of the parameter space}\label{sec:exploration}
    
    Now, we explore the space of parameters $(\delta,\gamma,p)$ in order to compare the cell to cell variation in the case of fast $\mu$RNA and in the case of free mRNA.
    
    In order to evaluate numerically the cell to cell variation we need to compute $m_k(\rhofast^{\delta,\gamma,p})$, for $k=0,1,2$. Observe that after a change of variable these quantities can be rewritten (up to an explicit multiplicative constant depending on parameters)
    \[
    I_k = \int_0^\infty f_k(s)\,s^{\delta-2}e^{-s}\dd s\,,
    \]
    with $f_k(s) = s^{2-k}(1+s/(\gamma\delta))^{p\gamma\delta}$. For the numerical computation of these integrals, we use a Gauss-Laguerre quadrature
    \[
    I_k\ \approx\ \sum_{i=1}^N\omega_i^Nf_k(x_i^N)\,.
    \]
    which is natural and efficient as we are dealing with functions integrated against a gamma distribution. We refer to \cite{nist_2010} and references therein for the definition of the coefficients $\omega_i^N$ and quadrature points $x_i^N$. The truncation order $N$ is chosen such that the numerical error between the approximation at order $N$ and $N+1$ is inferior to the given precision $10^{-8}$ when $p\leq1$. For $p\geq1$, the function $f_k$ may take large values and it is harder to get the same numerical precision. In the numerical results below the mean error for the chosen sets of parameters with large values of $p$ is around $10^{-4}$ and the maximal error is $10^{-2}$. This is good enough to comment on qualitative behavior. \Max{The code used for these numerical simulations is publicly available on GitLab \cite{code}.}
    
    We plot the relative cell to cell variation $\mathrm{CV}(\rhofast^{\delta,\gamma,p}) / \mathrm{CV}(\rho_0^{\delta})$ with respect  to $\gamma$ and $p$ for two different values of $\delta$. The results are displayed on Figure \ref{fig:cvsurf}. 
    Then, on Figure \ref{fig:cvcurve}, we draw the explicit distributions $\rhofast^{\delta,\gamma,p}$ for various sets of parameters and compare it with $\rho_0^{\delta}$.
    
    \begin{figure}
        \begin{tabular}{cc}
            $\delta = 2$ & $\delta = 20$\\
            \includegraphics[width=.5\textwidth]{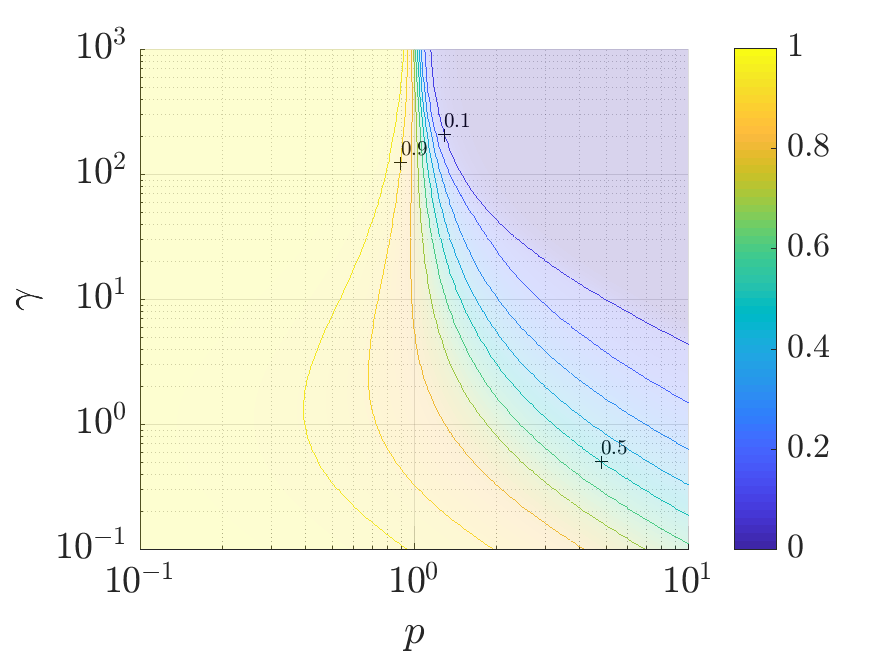} & \includegraphics[width=.5\textwidth]{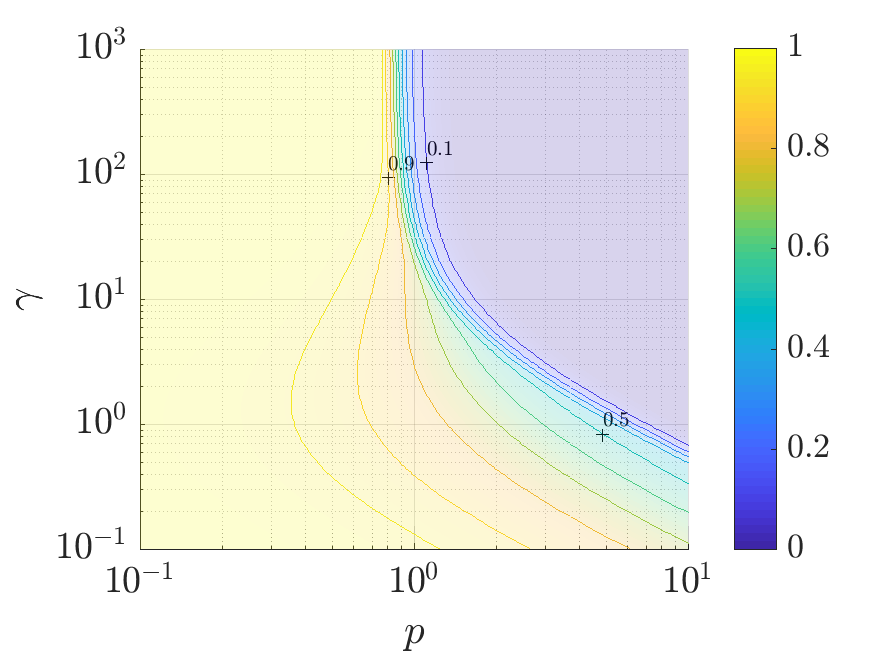}
        \end{tabular}
        \caption{\label{fig:cvsurf}\textbf{Exploration of the parameter space.} Relative cell to cell variation $\mathrm{CV}(\rhofast^{\delta,\gamma,p}) / \mathrm{CV}(\rho_0^{\delta})$ for various parameters $p$, $\gamma$ and $\delta$. On the horizontal axis, left means more production of mRNA and right means more production of $\mu$RNA; On the vertical axis, top means more destruction of mRNA by binding and bottom means more destruction/consumption of mRNA by other mechanisms}
    \end{figure}
    
        The numerical simulations of Figure~\ref{fig:cvsurf} suggest that the bound \eqref{eq:bestboundyet} is non-optimal \Max{and Conjecture~\ref{conj} should be satisfied. Observe also that the asymptotics of Lemma~\ref{lem:asymp} are illustrated.}
        
        From a modeling point of view, these simulations confirm that for any choice of parameter, the presence of (fast) $\mu$RNA makes the cell to cell variation decrease compared to the case without $\mu$RNA. Moreover, the qualitative behavior with respect to the parameters makes sense. Indeed we observe that whenever enough $\mu$RNA is produced ($p\geq1$), the increase of the binding phenomenon ($\gamma\to\infty$) makes the cell to cell variation decay drastically. 
    
    \begin{figure}
        \begin{tabular}{cc}
            \includegraphics[width=.5\textwidth]{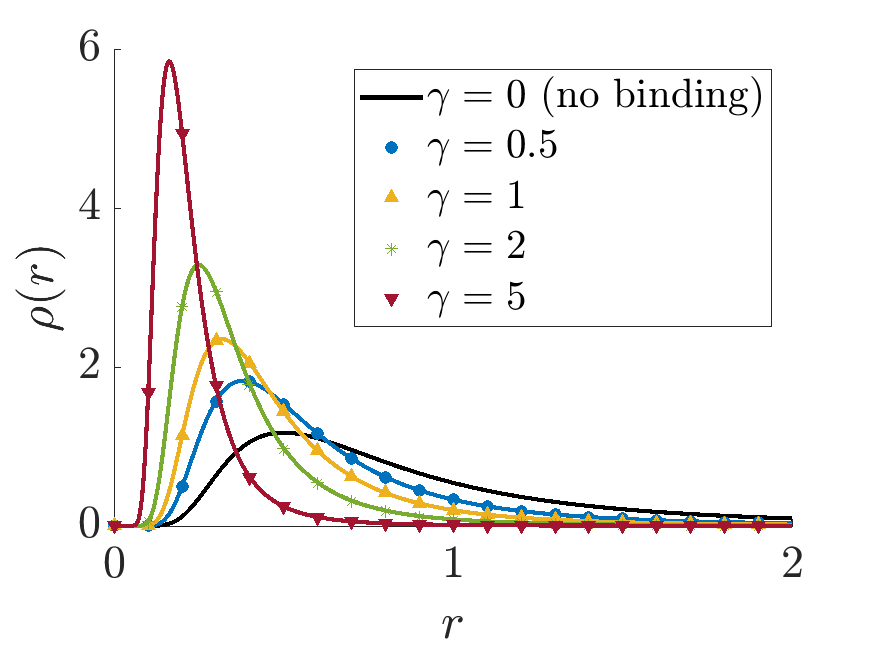} & \includegraphics[width=.5\textwidth]{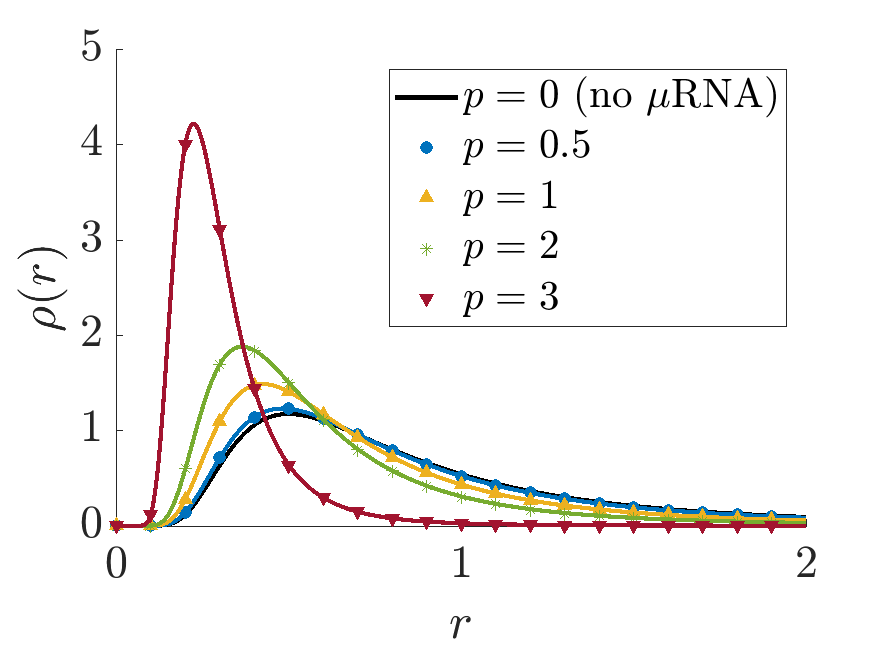}
        \end{tabular}
        \caption{\label{fig:cvcurve}Marginal distributions of mRNAs $\rhofast^{\delta,\gamma,p}$ for fast $\mu$RNAs compared to the free mRNA distribution $\rho_0^{\delta}$ (black solid curve) for different parameters $p$ and $\gamma$. Left: $\delta=2$, $p=1.5$ and $\gamma$ varies. Right: $\delta=2$, $\gamma=1$ and $p$ varies.}
    \end{figure}
    
    \section{Noise reduction by binding for the main Fokker-Planck model: numerical \Max{evidence}  }\label{sec:noise_main}
    In this section, we compute the gene expression level of the main model described by equation \eqref{eq:FPstat}. In this case, as there is no explicit formula for the solution, we will compute an approximation of it using a discretization of the Fokker-Planck equation. In order to compute the solution in practice, we restrict the domain to the bounded domain $\Omega_b = [r_{\min},r_{\max}]\times[\mu_{\min},\mu_{\max}]$. Because of the truncation, we add zero-flux boundary conditions in order to keep a conservative equation. It leads to the problem
    \begin{equation}
        \left\{
        \begin{array}{l}
            \ds\partial_r\left[(c_r - c\,r\,\mu - k_r\,r)f - \partial_r (\sigma_rr^2f)\right]\\[.5em]\quad\qquad+\ \partial_\mu\left[(c_\mu - c\,r\,\mu - k_\mu\,\mu)f - \partial_\mu( \sigma_\mu\mu^2f)\right]\ =\ 0\,,\quad \text{in }\Omega_b\\[.5em]
            \partial_r (\sigma_rr^2f) -(c_r - c\,r\,\mu - k_r\,r)f = 0\,,\quad\text{ if } r = r_{\min}\text{ or }r_{\max}\,, \\[.5em]
            \partial_\mu( \sigma_\mu\mu^2f) -(c_\mu - c\,r\,\mu - k_\mu\,\mu)f = 0\,,\quad\text{ if } \mu = \mu_{\min}\text{ or }\mu_{\max}\,,\\[.5em]
            \ds \int_{\Omega_b} f\dd r\dd\mu\ =\ 1\,. \
        \end{array}
        \right.
        \label{eq:FPtrunc}
    \end{equation}
    
    \subsection{\Max{Reformulation of the equation}}
    
    In order for the numerical scheme to be more robust with respect to the size of the parameters, \Max{we discretize the equation in dimensionless version \eqref{eq:FPtrunc_adim}}. It will also allow for comparisons with numerical experiments of the previous sections.
   
    As the coefficients in the advection and diffusion parts of \eqref{eq:FPtrunc_adim} grow rapidly in $r$, $\mu$ and degenerate when $r=0$ and $\mu=0$, \Max{the design of an efficient numerical solver for} 
    \eqref{eq:FPtrunc_adim} is not straightforward. Moreover a desirable feature of the scheme would be a preservation of the analytically known solution corresponding to $\gamma=0$. Because of these considerations we will discretize a reformulated version of the equation in which the underlying inverse gamma distributions explicitly appear. It will allow for a better numerical approximation when $r$ and $\mu$ are either close to $0$ or large.  The reformulation is the following
    \begin{equation}\label{eq:FPtrunc_adim_reform}
        \ds-\partial_r\left[r^2\,h^{(1)}(r,\mu)\partial_r\left(\frac{f^\text{ad}}{h^{(1)}(r,\mu)}\right)\right] \,-\,\partial_\mu\left[\nu\,\mu^2\,h^{(2)}(r,\mu)\partial_\mu\left(\frac{f^\text{ad}}{h^{(2)}(r,\mu)}\right)\right]\ =\ 0\,,
    \end{equation}
    with the associated no-flux boundary conditions and where the functions \Max{$h^{(1)}$} and \Max{$h^{(2)}$} are given by
    \begin{equation}\label{eq:h1}
        h^{(1)}(r,\mu)\ =\ r^{-(1+p\mu\gamma)\delta-2}\,\exp\left(-\frac{\delta}{r}\right)\,,
    \end{equation}
    and
    \begin{equation}\label{eq:h2}
        h^{(2)}(r,\mu)\ =\ \mu^{-(1+r\gamma)\delta\frac{\kappa}{\nu}-2}\,\exp\left(-\frac{\delta\kappa}{\nu r}\right)\,.
    \end{equation}

    \subsection{Presentation of the numerical scheme}
    
    We use a discretization based \Max{on the} reformulation \eqref{eq:FPtrunc_adim_reform}. It is inspired by \cite{bessemoulin2018hypocoercivity} and is fairly close to the so-called Chang-Cooper scheme \cite{chang1970practical}.

    We use a finite-volume scheme. The rectangle $\Omega_b$ is discretized with a structured regular mesh of size $\Delta r$ and  $\Delta \mu$ in each respective direction. The centers of the control volumes are the points $\left(r_i,\mu_j\right)$ with  $r_i = \Delta r/2 + i\Delta r$  and $\mu_j = \Delta \mu/2 + j\Delta \mu$ for \Max{$i\in\{0,\dots,N_r-1\}$} and \Max{$j\in\{0,\dots,N_\mu-1\}$}. We also introduce the intermediate points
    $r_{i+1/2}$ with $i\in\{-1,\dots,N_r-1\}$ and $\mu_{j+1/2}$ with $j\in\{-1,\dots,N_\mu-1\}$ defined with the same formula as before. The approximation of the solution on the cell $(i,j)$ is denoted \Max{by}
    \[
    f_{ij} \approx \frac{1}{\Delta r \Delta \mu} \int_{r_{i-1/2}}^{r_{i+1/2}}\int_{\mu_{j-1/2}}^{\mu_{j+1/2}} f^\text{ad}(r, \mu) \dd r \dd \mu.
    \]
    The scheme reads, for all \Max{$i\in\{0,\dots,N_r-1\}$} and \Max{$j\in\{0,\dots,N_\mu-1\}$},
    \begin{equation}
        \left\{
        \begin{array}{l}
            F_{i+1/2,j} - F_{i-1/2,j} + G_{i,j+1/2} - G_{i,j-1/2} = 0,\\[.5em]
            F_{N_r-1/2,j}\ =\ F_{-1/2,j}\ =\ G_{i,N_\mu-1/2}\ =\ G_{i,-1/2}\ =\ 0\\[.5em]
            \ds\sum_{i,j}f_{ij}\Delta r \Delta \mu \ =\ 1
        \end{array}
        \right.
        \label{eq:scheme}
    \end{equation}
    where the fluxes are given by a centered discretization of the reformulation \eqref{eq:FPtrunc_adim_reform}, namely
    \begin{equation}\label{eq:deffluxF}
        F_{i+1/2,j} = -\frac{\Delta \mu}{\Delta r}\,r_{i+1/2}^2\,\left(\frac{h^{(1)}(r_{i+1/2},\mu_j)}{h^{(1)}(r_{i+1},\mu_j)}f_{i+1,j}-\frac{h^{(1)}(r_{i+1/2},\mu_j)}{h^{(1)}(r_{i},\mu_j)}f_{ij}\right)\,,
    \end{equation}
    and
    \begin{equation}\label{eq:deffluxG}
        G_{i,j+1/2} =  -\nu\,\frac{\Delta r}{\Delta \mu}\,\mu_{j+1/2}^2\,\left(\frac{h^{(2)}(r_{i},\mu_{j+1/2})}{h^{(2)}(r_{i},\mu_{j+1})}f_{i,j+1}-\frac{h^{(2)}(r_{i},\mu_{j+1/2})}{h^{(2)}(r_{i},\mu_j)}f_{ij}\right)\,.
    \end{equation}
    One can show  that the scheme \eqref{eq:scheme} possesses a unique solution which is non-negative by following, for instance, the arguments of \cite[Proposition 3.1]{chainais_2011_finite}. Moreover, by construction, the scheme is exact in the case $\gamma=0$.
    
    \begin{rema}[Choice of $r_{\min}$, $r_{\max}$, $\mu_{\min}$, $\mu_{\max}$]
        Clearly $f$ decays faster at infinity than $\rho_0$ since the convection term coming \Max{from} the binding phenomenon brings mass closer to the origin. Therefore an appropriate choice for $r_{\max}$ and $\mu_{\max}$, coming from the decay of the involved inverse gamma distributions, should be (say) $r_{\max}^{-\delta}\leq 10^{-8}$ and $\mu_{\max}^{-\delta\kappa/\nu}\leq 10^{-8}$ so that the error coming from the tails of the distributions in the computation of moments is negligible. Similarly, near the origin the distributions decay very quickly to $0$ (as $\exp(-1/\cdot)$). Therefore $\mu_{\min}$, $r_{\min}$ can be taken not too small without influencing the precision in the computation of moments of the solution. In practice, we chose $\mu_{\min} = r_{\min} = 0.06$. Observe that even if nothing prevents the choice $\mu_{\min} = r_{\min} = 0$ on paper, one experiences in practice a bad conditioning of the matrix which has to be inverted for solving the scheme.
    \end{rema}

    \begin{rema}[Implementation]
     Observe that the matrix which has to be inverted in order to solve the scheme is not a square matrix because of the mass constraint (which is necessary to ensure uniqueness of the solution). In practice, in order to solve the corresponding linear system
     $MF = B$ where $F = (f_{ij})_{ij}$ and $B = (0,\dots,0, 1)\in\RR^{N_rN_\mu+1}$ and $M\in\RR^{(N_rN_\mu+1)\times N_rN_\mu}$ we use the pseudo-inverse yielding $F = (M^tM)^{-1}M^tB$. Finally the use of a sparse matrix routine greatly improves the computation time. Our implementation was made using Matlab. \Max{The code is publicly available on GitLab \cite{code}.}
    \end{rema}
  
    \subsection{Numerical results}
    
    \begin{figure}
        \begin{tabular}{ccc}
            \raisebox{-.5\height}{\includegraphics[width=.40\linewidth]{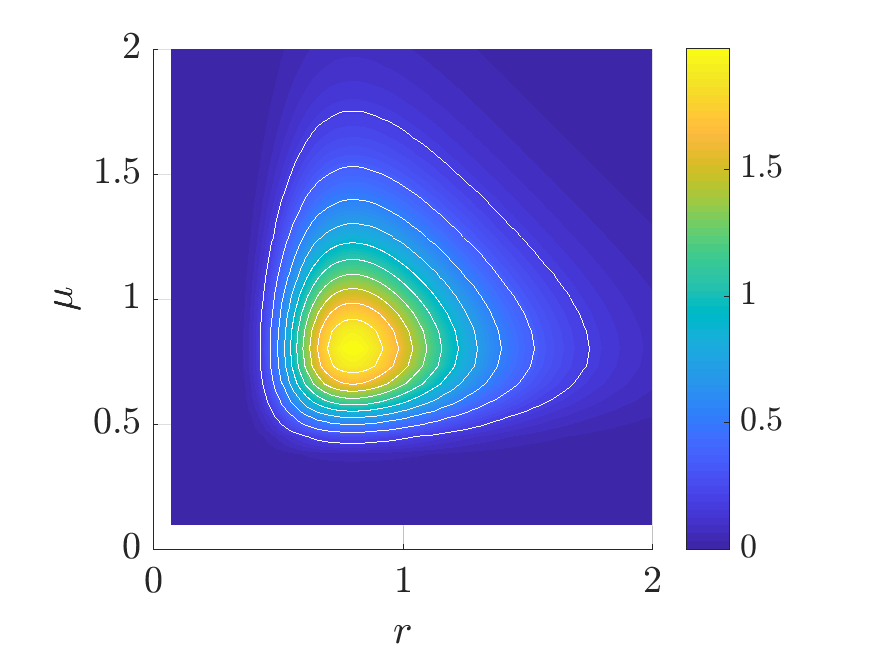}}&
            \begin{minipage}{.05\linewidth}${\scriptsize\gamma = 0}$\end{minipage}&\raisebox{-.5\height}{\includegraphics[width=.40\linewidth]{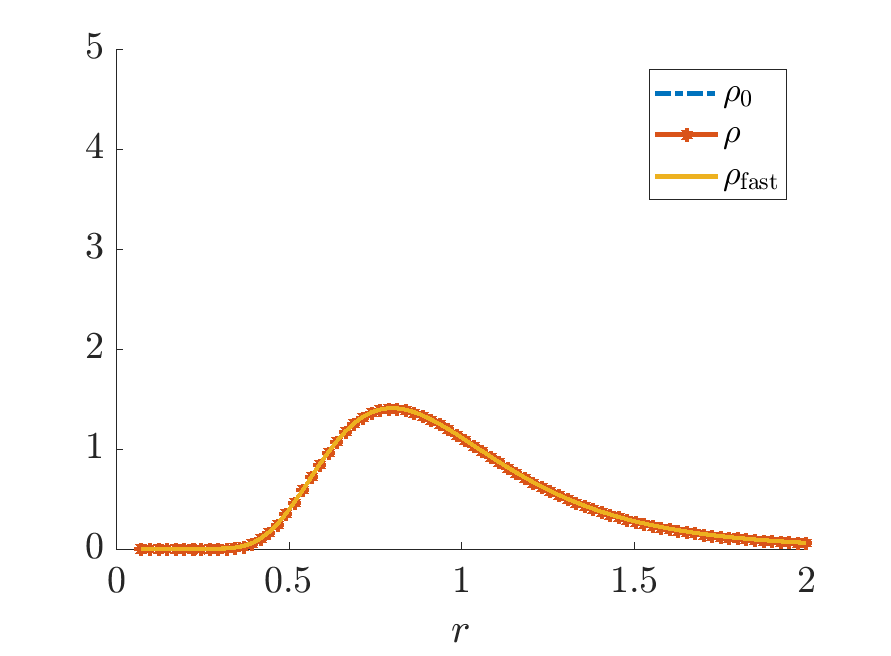}}\\\hdashline
            \raisebox{-.5\height}{\includegraphics[width=.40\linewidth]{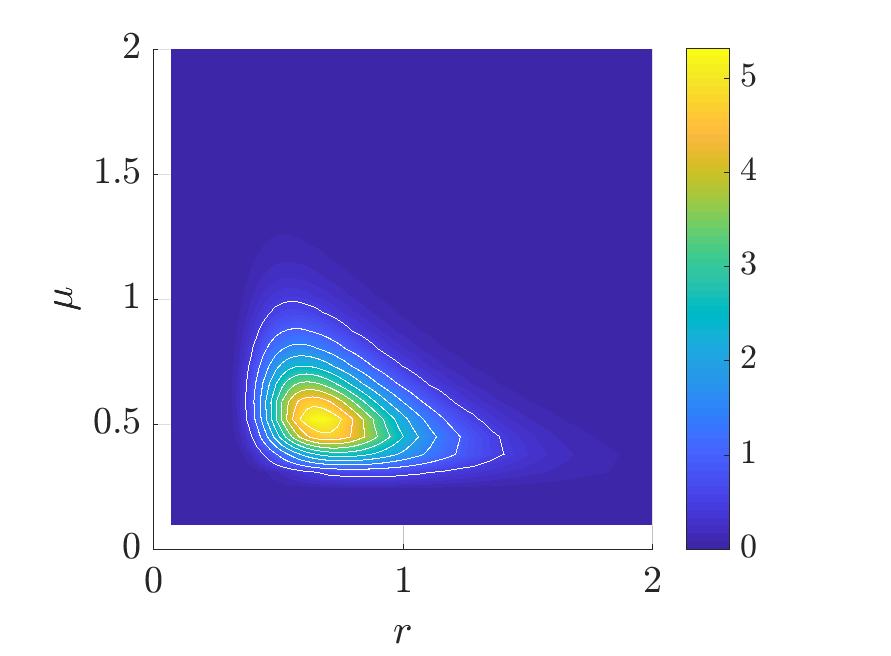}}&
            \begin{minipage}{.1\linewidth}${\scriptsize\gamma = 1}$\\${\scriptsize p = 0.5}$\end{minipage}& \raisebox{-.5\height}{\includegraphics[width=.40\linewidth]{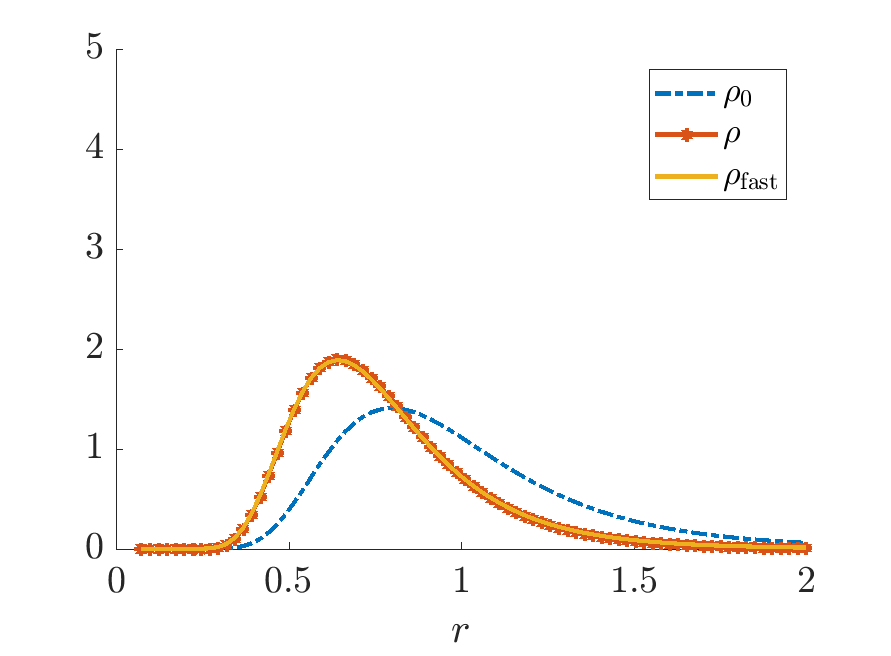}}\\\hdashline
            \raisebox{-.5\height}{\includegraphics[width=.40\linewidth]{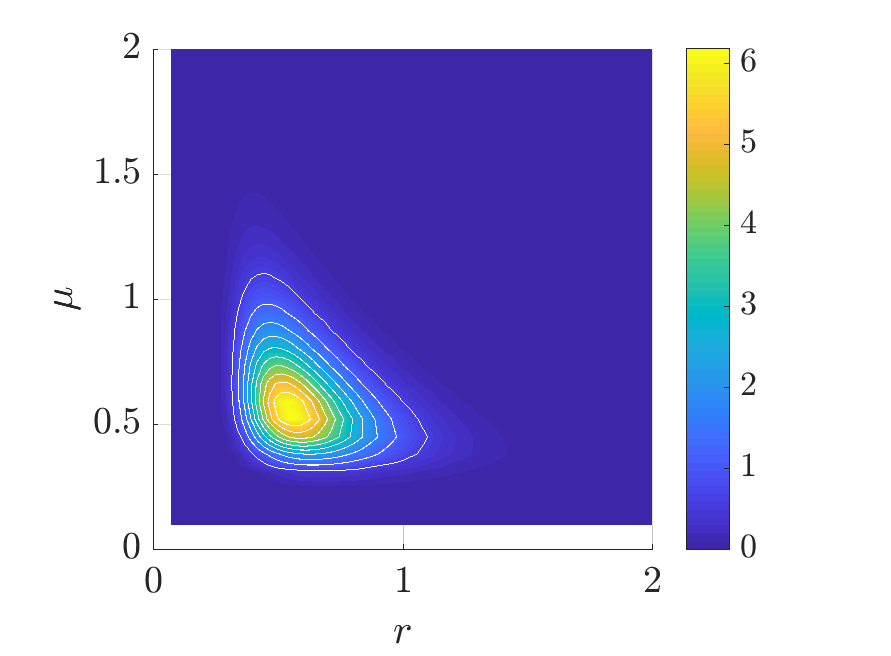}}&
            \begin{minipage}{.05\linewidth}${\scriptsize\gamma = 1}$\\${\scriptsize p = 1}$\end{minipage}&\raisebox{-.5\height}{\includegraphics[width=.40\linewidth]{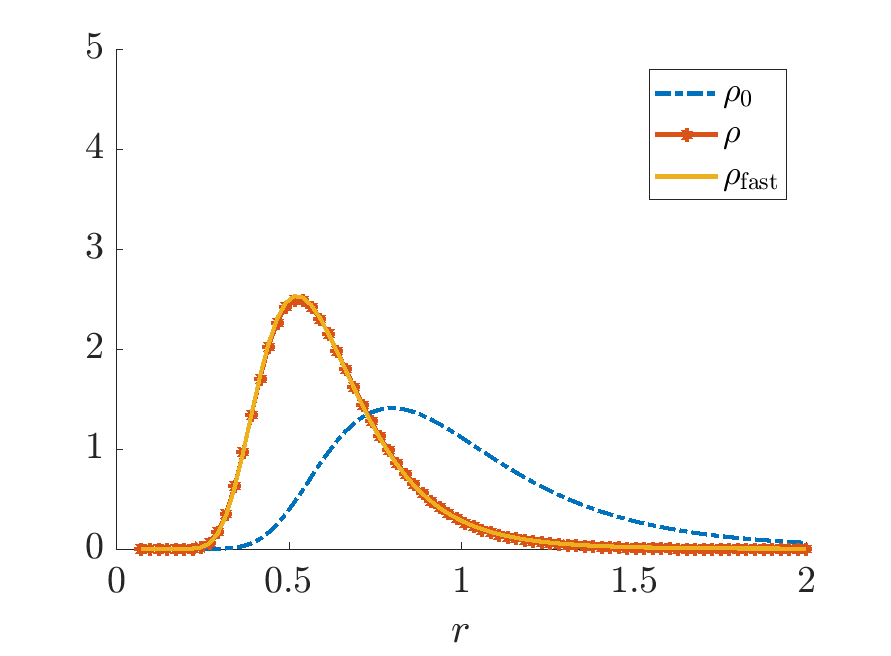}}\\\hdashline
            \raisebox{-.5\height}{\includegraphics[width=.40\linewidth]{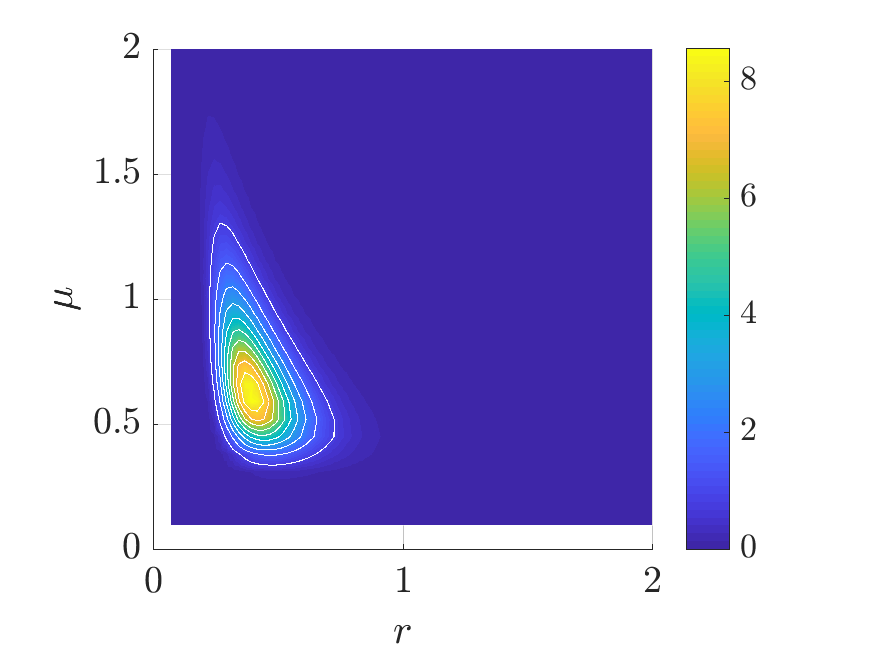}}&
            \begin{minipage}{.05\linewidth}${\scriptsize\gamma = 1}$\\${\scriptsize p = 2}$\end{minipage}&\raisebox{-.5\height}{\includegraphics[width=.40\linewidth]{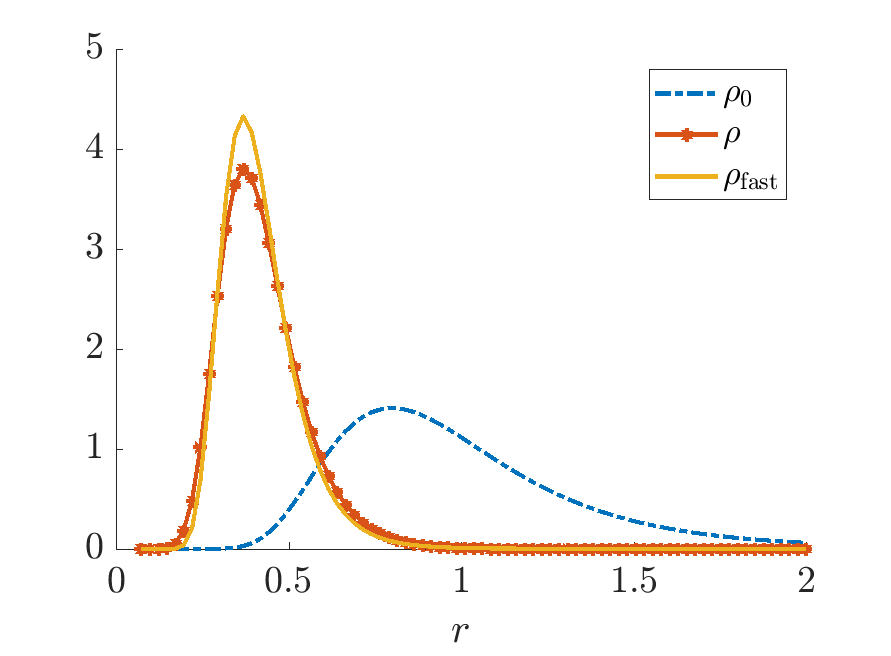}}\\\hdashline
            \raisebox{-.5\height}{\includegraphics[width=.40\linewidth]{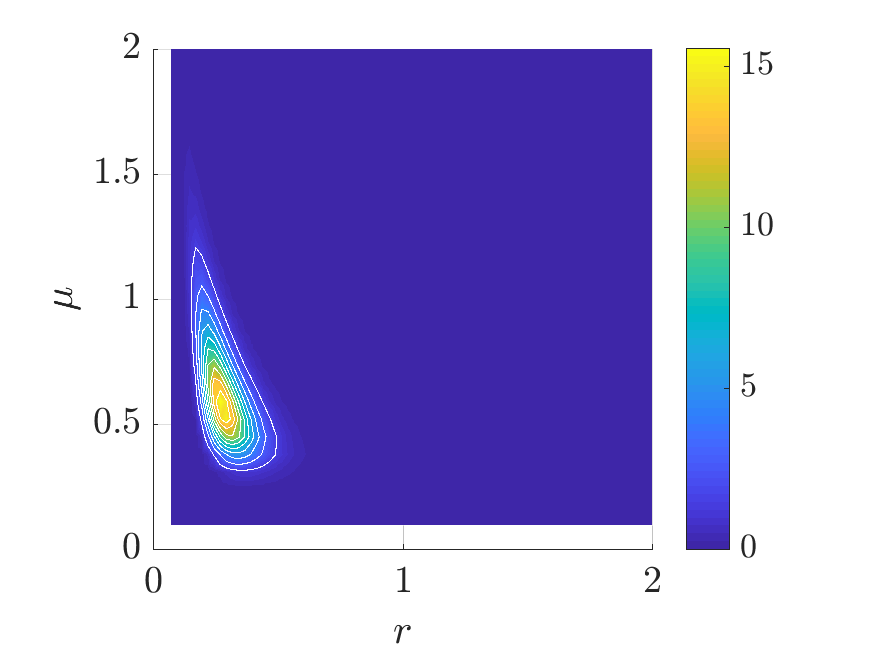}}&
            \begin{minipage}{.05\linewidth}${\scriptsize\gamma = 2}$\\${\scriptsize p = 2}$\end{minipage}&\raisebox{-.5\height}{\includegraphics[width=.40\linewidth]{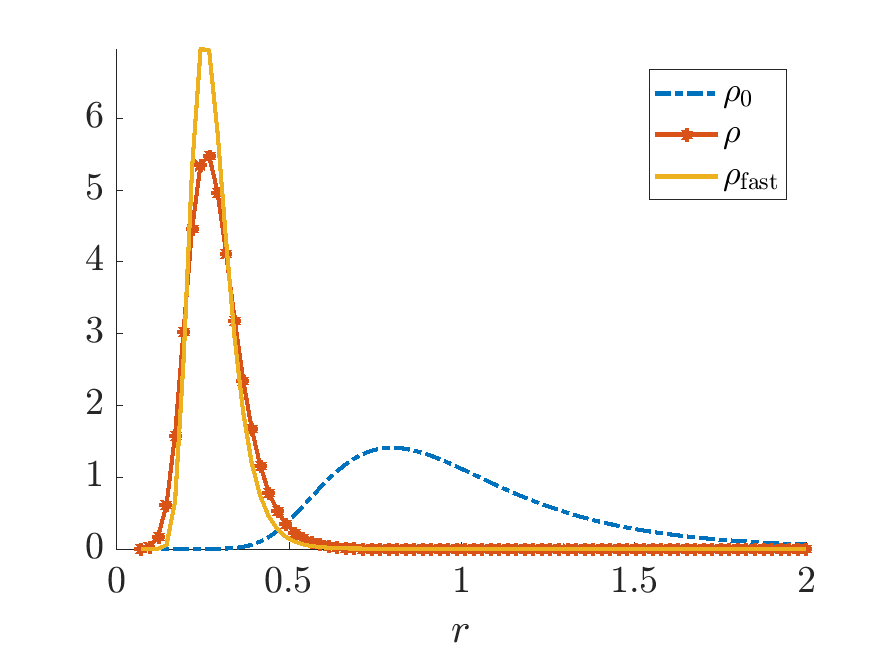}}
        \end{tabular}
        \caption{\label{fig:density}\textbf{Numerical results.} Numerical \Max{solution} 
        of the main Fokker-Planck model for various sets of parameters $(\gamma,p)$. \emph{Left:} Surface and contour plot of the distribution function $f(r,\mu)$. The truncation at $r=2$ and $\mu=2$ is only for visualization purposes. \emph{Right:} Corresponding marginal density $\rho$ compared with $\rhofast$ and $\rho_0$.}
    \end{figure}
       \begin{figure}
          \begin{tabular}{c}
          \begin{tabular}{cc}
              ${\scriptsize p=0.5}$&${\scriptsize p=2}$\\
           \includegraphics[width=.45\linewidth]{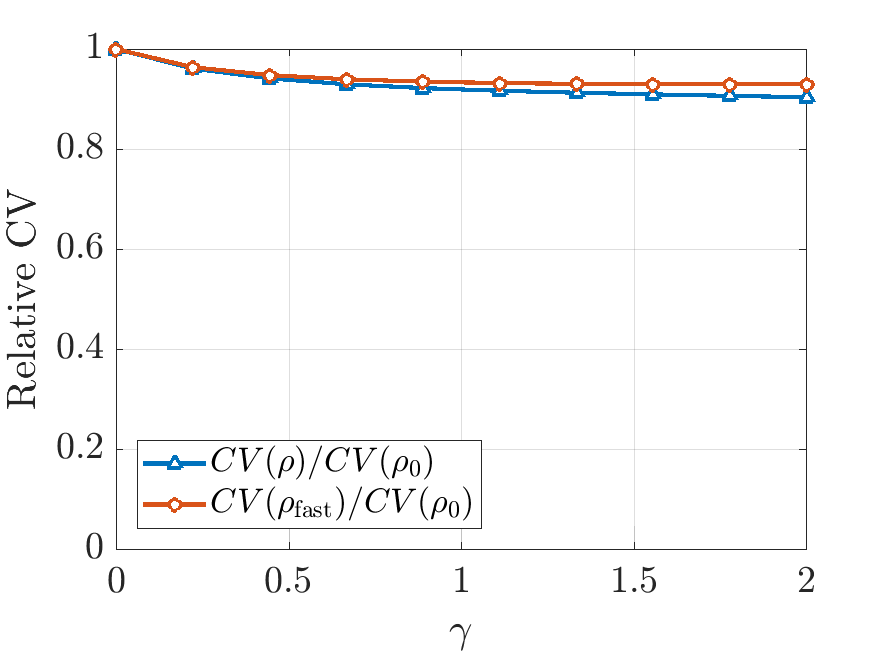}&\includegraphics[width=.45\linewidth]{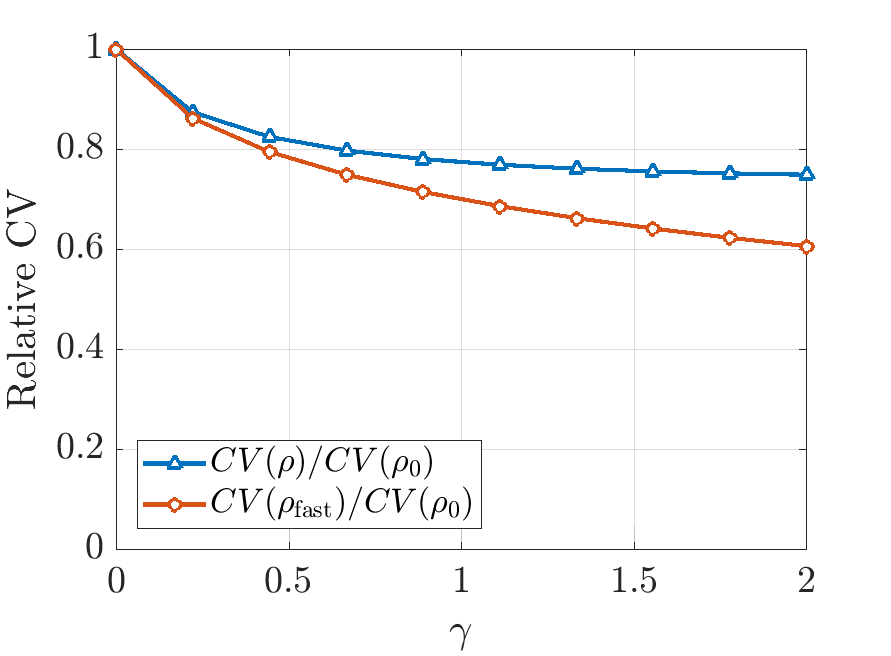}
          \end{tabular}\\
          \begin{tabular}{c}
           ${\scriptsize p=1}$\\
            \includegraphics[width=.45\linewidth]{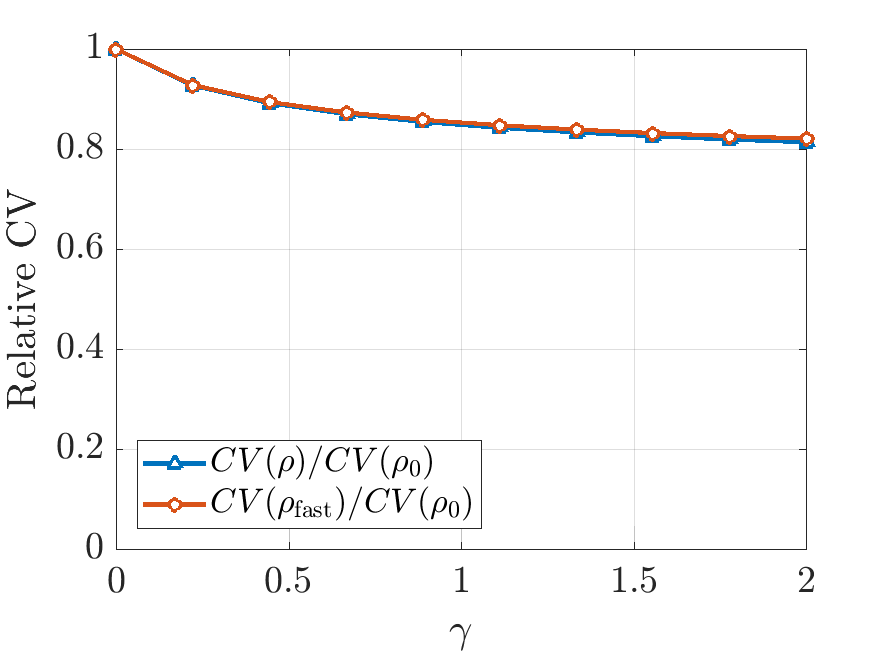}
          \end{tabular}
          \end{tabular}
          \caption{\label{fig:CV_mainFP}\textbf{Numerical results.} Relative coefficient of variation versus $\gamma$ for various values of $p$.}
      \end{figure}

    In our test cases we use the following parameters: $r_{\min} = 0.06$, $r_{\max} = 5$, $\mu_{\min} = 0.05$, $\mu_{\max} = 5$, $\delta = 8$, $N_r = 70$, $N_\mu = 200$, $\kappa = 1$, $\nu = 1$.
    
    On Figure~\ref{fig:density} we compare the distribution functions $f(r,\mu)$  obtained for various sets of parameters $(p,\gamma)$. We also draw the corresponding marginal $\rho(r)$ as well as $\rho_0$ and $\rhofast$. We observe that for small values of $p$, $\rhofast$ is a good approximation of $\rho$. For larger values it tends to amplify the phenomenon of variance reduction. 
    
    In order to confirm that the main Fokker-Planck model reduces the coefficient of variation as soon as $\gamma>0$ we draw on Figure~\ref{fig:CV_mainFP} the coefficient of variation for each distribution $\rho, \rhofast$ relatively to that of $\rho_0$ for several values of $p$. We observe that indeed, the coefficient of variation is reduced. As in the case of fast $\mu$RNA, the decay is more pronounced when the production of $\mu$RNA is higher than that of mRNA, namely when $p>1$. Interestingly enough, one also notices that the approximation $\rhofast$ increases the reduction of CV when $p>1$ and diminishes it when $p<1$. A transition at the special value $p=1$ was already observed on Figure~\ref{fig:cvsurf}.

    \section{Comments on the choice of noise}\label{sec:comments}
    
    In this section, we discuss the influence of the type of noise in the Fokker-Planck models. Let us go back to the system of stochastic differential equations considered at the beginning and generalize it as follows  
    \[
        \left\{
        \begin{array}{ccccc}
            \dd r_t&=&(c_r - c\, r_t\, \mu_t - k_r\, r_t)\, \dd t &+&\ \sqrt{2\,\sigma_r\,D(r_t)}\, \dd B_t^1\,,\\[1em]
            \dd\mu_t\, &=&\, (c_\mu\, - c\, r_t\, \mu_t - k_\mu\, \mu_t)\, \dd t &+& \sqrt{2\,\sigma_\mu\,D(\mu_t)}\, \dd B_t^2\,,
        \end{array}
        \right.
    \]
    with $D$ some given function. In the models of the previous sections we chose $D(x) = x^2$. On the one hand it is natural to impose that $D(x)$ vanishes when $x\to0$ in order to preserve the non-negativity of $r_t$ and $\mu_t$. On the other hand it is clear that the growth at infinity influences the tail of the equilibrium distribution which solves the corresponding Fokker-Planck equation. With a quadratic $D$ we obtained algebraically decaying distributions. 
    Nevertheless one may wonder if the decay of cell to cell variation due to $\mu$RNA would still be observed if $D$ is changed so that it involves distributions with faster decay at infinity. In order to answer this question, we choose a simple enough function $D$ so that we can still derive analytical formulas for distributions of mRNA without binding and mRNA in the presence of ``fast'' $\mu$RNA. Let us assume that
    \[
     D(r) = r\,.
    \]
    
    \subsection{Explicit formulas for distribution of mRNAs}
    
    In terms of modeling we may argue as in Section~\ref{sec:models} and Section~\ref{sec:solve} in order to introduce the stationary probability distribution of mRNA without binding $\tilde{\rho}_0$ which solves 
    \[
     \partial_r\left[\sigma_r\partial_r (r\trho_0) -(c_r - k_r\,r)\trho_0\right]\ =\ 0\,.
    \]
    It may still be solved analytically and one finds a gamma distribution
    \begin{equation}\label{eq:rho0_2}
    \trho_0(r)\ =\ \gamma_{\frac{c_r}{\sigma_r},\frac{k_r}{\sigma_r}}(r)\ =\ C_{\frac{c_r}{\sigma_r},\frac{k_r}{\sigma_r}}r^{\frac{c_r}{\sigma_r}-1}e^{-\frac{k_r}{\sigma_r}r}
    \end{equation}
    instead of an inverse gamma distribution in the quadratic case. The normalization constant is given in Section~\ref{sec:gamma}. 
    
    In the case of fast $\mu$RNA, we may once again follow the method of Section~\ref{sec:models} and Section~\ref{sec:solve} and introduce $\trhofast$ solving 
    \[
    \partial_r\left[\partial_r(\sigma_r\ r\ \trhofast) - (\ c_r - c\ r\ \tjfast(r) - k_r\ r)\trhofast  \right]\ =\ 0
    \]
    where the  conditional expectation of the number of $\mu$RNA within the population with $r$ mRNA is given by
    \[
    \tjfast(r)\ =\ \int_0^\infty \mu\,\gamma_{\frac{c_\mu}{\sigma_\mu},\frac{k_\mu + c\,r}{\sigma_\mu}}(\mu)\dd\mu\ =\ \frac{c_\mu}{k_\mu + c\,r}\,.
    \]
    A direct computation then yields 
    \begin{equation}\label{eq:rhofast_2}
        \trhofast(r)\ =\ C\,\left(1+\tfrac{c}{k_\mu}r\right)^{-\frac{c_\mu}{\sigma_r}}\,r^{\frac{c_r}{\sigma_r}-1}e^{-\frac{k_r}{\sigma_r}r}\,,
    \end{equation}
    with $C\equiv C(c_r,c_\mu, c, k_r, k\mu, \sigma_r, \sigma_\mu)$ a normalizing constant.
    \begin{rema}\label{rem:Ito}
     Observe that the  conditional expectation of the number of $\mu$RNA within the population with $r$ mRNA is unchanged, namely $\tjfast(r) = \jfast(r)$. \Max{More generally, the expectation of a univariate process $(X_t)_t$ satisfying an SDE with linear drift $\dD X_t = (a + bX_t)\dD t + \sqrt{2\sigma(X_t)}\dD B_t$ does not depend on the diffusion coefficient $\sigma$ as its density $g$ satisfy 
     $
      \partial_t g(t,x) + \partial_x((a+bx)g(t,x)) - \partial^2_{xx}(\sigma(x)g(t,x))=0\,,
     $
     so that multiplying by $x$ and integrating yields $\dD \mathrm{E}[X_t] = (a + b\mathrm{E}[X_t])\dD t$ on its expectation $\mathrm{E}[X_t]$. The argument also holds for multivariate processes.}

    \end{rema}
    
    \subsection{Dimensional analysis}
    Once again we seek the parameters of importance among the many parameters of the model by a dimensional analysis. The characteristic value of $r$ remains $\bar{r} = c_r/ k_r$ as it is the expectation of $\trho_0$. After rescaling we find the new distribution 
    \begin{equation}\label{eq:rho0adim_2}
        \trho_0^\eta(r)\ =\ \gamma_{\eta,\eta}(r)\ =\ C_{\eta,\eta}\,r^{\eta-1}e^{-\eta r}\,,
    \end{equation}
    and
    \begin{equation}\label{eq:rhofastadim_2}
        \trhofast^{\eta,\gamma,p}(r)\ =\ C_\text{fast}^\text{ad}\,\frac{r^{\eta-1}}{\left(1+\gamma r\right)^{p\eta}}\,e^{-\eta r}\,.
    \end{equation}
    where the parameters $p$ and $\gamma$ are given by \eqref{eq:gamma_param} and \eqref{eq:p}  respectively and still quantify the intensity of the binding and the respective production of $\mu$RNA versus mRNA. The new parameter $\eta$ is given by 
    \begin{equation}\label{eq:eta}
    \eta\ =\ \frac{c_r}{\sigma_r}\,.
    \end{equation}
    In the context of a dimensional analysis, let us mention that it would be inaccurate to compare $\eta$ and $\delta$ as the $\sigma_r$ (and $\sigma_\mu$) do not represent the same quantity depending on the choice of $D$. For $D(r) = r^2$ it has the same dimension as $k_r$ so $\delta = k_r/\sigma_r$ is the right dimensionless parameter. Here it has the same dimension as $c_r$, which justifies the introduction of $\eta$.

    \subsection{Numerical computation of the cell to cell variation}
    
    The expectation, variance and coefficient of variation of $\trho_0$ are explicitly given by
    \begin{equation}\label{eq:exp_var_cv_2}
        \mathrm{Exp}(\trho_0^{\eta})\ =\ 1\,,\quad \mathrm{Var}(\trho_0^{\eta})\ =\ \frac{1}{\eta}\,, \quad \mathrm{CV}(\trho_0^{\eta})\ =\ \frac{1}{\sqrt{\eta}}\,,
    \end{equation}
   
     \begin{figure}
        \begin{tabular}{cc}
            $\eta = 1$ & $\eta = 8$\\
            \includegraphics[width=.5\textwidth]{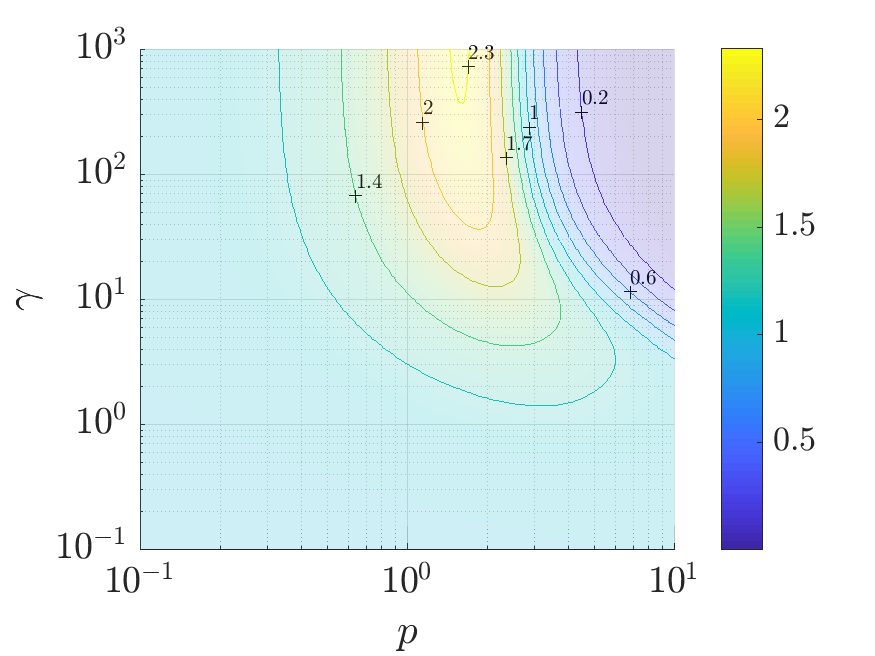} & \includegraphics[width=.5\textwidth]{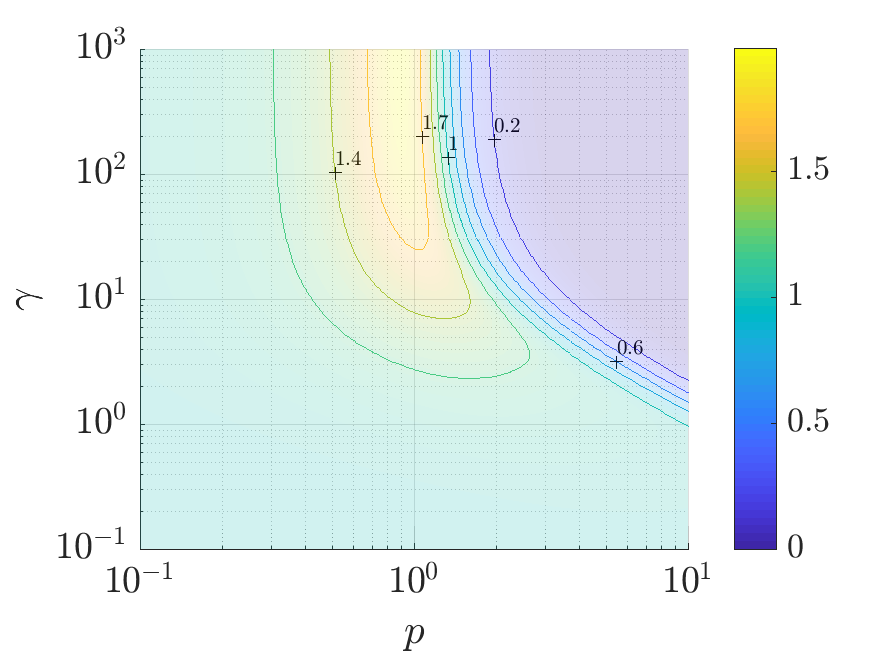}
        \end{tabular}
        \caption{\label{fig:cvsurf2}\textbf{Numerical computation of the cell to cell variation.} Relative cell to cell variation $\mathrm{CV}(\trhofast^{\eta,\gamma,p}) / \mathrm{CV}(\rho_0^{\eta})$ for various parameters $p$, $\gamma$ and $\eta$. On the horizontal axis, left means more production of mRNA and right means more production of $\mu$RNA; On the vertical axis, top means more destruction of mRNA by binding and bottom means more destruction/consumption of mRNA by other mechanisms.}
    \end{figure}
    
     As there is no explicit formula for the coefficient of variation of $\trhofast^{\eta,\gamma,p}$ we evaluate it numerically as in Section~\ref{sec:exploration}. The results are displayed on Figure~\ref{fig:cvsurf2}. We observe that unlike the case of a quadratic diffusion coefficient the relative cell to cell variation, \emph{i.e.} the cell to cell variation in the presence of $\mu$RNA relative to cell to cell variations of the free case, is not unconditionally less than $1$. For a large enough production of $\mu$RNA, it eventually decays when the binding effect is very strong. However for smaller production of $\mu$RNA or when the binding is weak, the effect is the opposite as the relative cell to cell variation is greater than $1$. This is not satisfactory from the modeling point of view.
     
     In conclusion the choice of noise is important in this model. An unconditional cell to cell variation decay in the presence of $\mu$RNA is observed for quadratic noise only. While other choices of noise may still lead to similar qualitative results, the choice $D(r) = r^2$ allowed us to derive explicit formulas for the approximate density $\rhofast$ which, as numerical simulations show, is fairly close to the marginal $\rho$ corresponding to the solution of the main Fokker-Planck model.

    \section{Concluding remarks and perspectives}
    
    In this paper, we introduced a new model describing the joint probability density of the number of mRNA and $\mu$RNA in a cell. It is based on a Fokker-Planck equation arising from a system of chemical kinetic equations for the number of two RNAs. The purpose of this simple model was to provide a mathematical { framework to investigate how robustness in gene expression in a cell is affected by the presence of a} regulatory feed-forward loop due to production of $\mu$RNAs which bind to and deactivate mRNAs.
    
    { Thanks to the combined use of analytical formulas and numerical simulations, we showed that robustness of gene expression is indeed affected by the presence of a feed-forward loop involving $\mu$RNA production. However, whether the effect is regulatory or de-regulatory strongly depends on the assumptions made on the type of noise affecting both mRNA and $\mu$RNAs production. In the case of geometric noise (the diffusivity being quadratic in the solution itself), the effect is to reduce the spread of the distribution as the reduction of the coefficient of variation shows. In the case of sub-geometric noise (the diffusivity being only linear in the solution itself), the effect increases the spread as shown by the increase of the coefficient of variation. We may attempt an explanation by comparing the  mRNA distribution in the absence of $\mu$RNA and in the limit of fast $\mu$RNA in the two cases. In the quadratic diffusivity case, both distributions are fat-tailed (i.e. they decay polynomially with the number of unbound mRNA molecules $r$, see \eqref{eq:rho0} and \eqref{eq:rhofast}) but the rate of decay at infinity is modified by the presence of $\mu$RNAs. On the other hand, in the linear diffusivity case, both decay exponentially fast (see \eqref{eq:rho0_2} and \eqref{eq:rhofast_2}) and the exponential rate of decay is the same with or without $\mu$RNAs. We propose that this might be the reason of the difference: in the quadratic diffusivity case, the change in polynomial decay allows to greatly reduce the standard deviation without affecting too much the mean, which results in a reduction of the coefficient of variation. In the linear diffusivity case, the exponential tail is not modified, which implies that the core of the distribution must be globally translated towards the origin, which affects the mean and the standard deviation in a similar way and does not systematically reduce the coefficient of variation. Which type of noise corresponds to the actual data is unknown at this stage. While quadratic diffusivity seems a fairly reasonable assumption (it is used in a number of contexts such as finance), it would require further experimental investigations to be fully justified in the present context. This discussion shows that the effect of $\mu$RNA on noise regulation of mRNA translation is subtle and not easily predictable.  }
		
		Along the way we provided theoretical tools for the analysis of the Fokker Planck equation at play and robust numerical methods for simulations. As the main biological hypothesis for the usefulness of $\mu$RNA in the regulation of gene expression is based on their ability to reduce external noise, we also discussed the particular choice of stochasticity in the model. 
    
    There are several perspectives to this work. A first one would be the calibration of the parameters of the model from real-world data. This would allow to quantify more precisely the amount of cell-to-cell variation reduction due to $\mu$RNA, thanks to the thorough numerical investigation done in this contribution of the effects of the parameters of the model. Besides, another perspective would be an improvement of Inequality~\eqref{eq:bestboundyet} \Max{to the Conjecture~\eqref{conj}}. This would bring a definitive theoretical answer to the hypothesis of increased gene expression level in the simplified model of ``fast'' $\mu$RNAs. One may also look into establishing a similar inequality for the general model. Finally, the gene regulatory network in a cell is considerably more complex than the simple, yet enlightening in our opinion, dynamics proposed in this paper. A natural improvement would be the consideration of more effects in the model, such as the production of the transcription factor, or the translation of mRNA into proteins, among many others.

    \appendix
    \Max{\section{Complementary results}}
    \subsection{Poincaré inequalities for gamma and inverse gamma distributions}\label{s:weightPoinc}
    
    In this section we give a elementary proof of the 1D version of the Brascamp and Lieb inequality (see \cite[Theorem 4.1]{brascamp_1976_extensions}, \cite{bobkov_2000_brunn}), which is an extension of the Gaussian Poincaré inequality in the case of log-concave measures. This allows us to derive a weighted Poincaré inequality for the gamma distribution and deduce, by a change of variable, a similar functional inequality for the inverse gamma distribution.
    
    \begin{prop}\label{p:poincare}
        Let $I\subset\RR$ be an open non-empty interval and $V:I\rightarrow\RR$ a function of class $\mathcal{C}^2$. Assume that
        \begin{itemize}
            \item[\emph{(i)}] $V$ is strictly convex;
            \item[\emph{(ii)}] $e^{-V}$ is a probability density on $I$;
            \item[\emph{(iii)}] $V$ tends to $+\infty$ at the extremities of $I$. 
        \end{itemize}
        Then, for any suitably integrable function $u$, one has
        \begin{equation}\label{eq:poincare_general}
            \int_I|u(x) - \lla\,u\,\rra_{e^{-V}}|^2\,e^{-V(x)}\,\dD x\ \leq\ \,\int_I|u'(x)|^2\,e^{-V(x)}\,(V''(x))^{-1}\,\dD x\,,
        \end{equation}
        where for a density $\nu$ the notation $\lla\,u\,\rra_{\nu}$ denotes $\int u\nu$.
    \end{prop}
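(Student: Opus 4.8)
The plan is to use the one-dimensional ``ground-state substitution plus completing the square'' argument, which yields the Brascamp--Lieb inequality with essentially no extra machinery. Since replacing $u$ by $u - \lla u\rra_{e^{-V}}$ changes neither side of \eqref{eq:poincare_general}, I may assume $\lla u\rra_{e^{-V}} = 0$ from the start; I may also assume the right-hand side is finite (otherwise there is nothing to prove) and, by a routine truncation and approximation, that $u$ is smooth with $u$ and $u'$ bounded, so that every integral below is finite and every boundary term is controlled by hypotheses (ii)--(iii).

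Writing $I = (\ell, r)$ with $-\infty \le \ell < r \le +\infty$, the first step is to introduce $G(x) = \int_\ell^x u(t)\,e^{-V(t)}\,\dD t$. This is well defined because $u\,e^{-V}\in L^1(I)$, it vanishes at $\ell$ by definition, and it vanishes at $r$ because $\int_I u\,e^{-V} = \lla u\rra_{e^{-V}} = 0$; here both the normalization (ii) and the mean-zero reduction are used to force the two endpoint values to be zero. Setting $g = e^{V}G$, one then has $(g\,e^{-V})' = u\,e^{-V}$, equivalently $g' - V'g = u$, and the product $g\,e^{-V} = G$ tends to $0$ at both ends of $I$.

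The heart of the proof is two integration-by-parts identities. From $(g\,e^{-V})' = u\,e^{-V}$ one gets
\[
\int_I u^2\,e^{-V}\,\dD x \;=\; \int_I u\,(g\,e^{-V})'\,\dD x \;=\; -\int_I u'\,g\,e^{-V}\,\dD x\,,
\]
the boundary term vanishing since $g\,e^{-V} = G\to 0$ at both ends. On the other hand, squaring $u = g' - V'g$ and expanding, then integrating the cross term by parts with $(V'\,e^{-V})' = (V'' - (V')^2)\,e^{-V}$, the $(V')^2$-contributions cancel and one is left with
\[
\int_I u^2\,e^{-V}\,\dD x \;=\; \int_I (g')^2\,e^{-V}\,\dD x \;+\; \int_I g^2\,V''\,e^{-V}\,\dD x\,.
\]
Since $V'' > 0$ by strict convexity~(i), this gives the crucial estimate $\int_I g^2\,V''\,e^{-V} \le \int_I u^2\,e^{-V}$. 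Combining it with the first identity and the Cauchy--Schwarz bound $\bigl|\int_I u'\,g\,e^{-V}\bigr| \le \bigl(\int_I (u')^2 (V'')^{-1} e^{-V}\bigr)^{1/2}\bigl(\int_I g^2 V'' e^{-V}\bigr)^{1/2}$ yields $\int_I u^2 e^{-V} \le \bigl(\int_I (u')^2 (V'')^{-1} e^{-V}\bigr)^{1/2}\bigl(\int_I u^2 e^{-V}\bigr)^{1/2}$, and dividing by $\bigl(\int_I u^2 e^{-V}\bigr)^{1/2}$ --- the case of a vanishing left-hand side being trivial --- gives exactly \eqref{eq:poincare_general}.

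The only genuinely delicate point I expect is justifying that the boundary terms in the two integrations by parts vanish, in particular the term $[\,g^2\,V'\,e^{-V}\,] = [\,e^{V}G^2\,V'\,]$ at an endpoint where $V\to+\infty$. Under (iii) one bounds $|G|$ there by a constant times $\int e^{-V}$, and an elementary comparison --- for instance l'H\^opital's rule applied to $(\int e^{-V})^2/e^{-V}$ --- shows $e^{V}G^2\to 0$, which together with the boundedness of $u$ kills all boundary contributions. This is precisely why I would first reduce to $u$ smooth with bounded derivative: it makes every integral manifestly finite, and the general ``suitably integrable'' statement is then recovered by truncating $u$ and passing to the limit via monotone and dominated convergence.
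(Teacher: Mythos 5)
Your proof is correct, but it takes a genuinely different route from the paper's. The paper adapts Poincar\'e's original argument for the Poincar\'e--Wirtinger inequality: it writes the variance as the symmetrized double integral $\tfrac12\iint_{I\times I}|u(x)-u(y)|^2e^{-(V(x)+V(y))}\,\mathrm{d}x\,\mathrm{d}y$, represents $u(x)-u(y)=\int_x^yu'$, applies Cauchy--Schwarz along the segment with the pair of weights $(V'')^{-1}$ and $V''$ (so that $\int_x^yV''=V'(y)-V'(x)$), and concludes by Fubini and a single integration by parts, working with compactly supported $u$ so that boundary terms are harmless. You instead use the ground-state substitution: with $G=\int_\ell^{\cdot}u\,e^{-V}$ and $g=e^VG$, so that $u=g'-V'g$, you combine the identity $\int_Iu^2e^{-V}=\int_I(g')^2e^{-V}+\int_Ig^2V''e^{-V}$ (hence $\int_Ig^2V''e^{-V}\le\int_Iu^2e^{-V}$) with $\int_Iu^2e^{-V}=-\int_Iu'ge^{-V}$ and a weighted Cauchy--Schwarz. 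Both arguments are elementary and both are sharp; your equality analysis would force $g$ constant, i.e.\ $u=aV'+b$, matching the paper's remark. What your version buys is that it is precisely the one-dimensional instance of the H\"ormander--Helffer $L^2$ method and therefore extends directly to the multidimensional Brascamp--Lieb inequality with weight $(\nabla^2V)^{-1}$, whereas the symmetrization trick is tailored to dimension one. What it costs is the boundary analysis for $\left[\,ugе^{-V}\,\right]$ and $\left[\,g^2V'e^{-V}\,\right]$, which you rightly flag as the only delicate point and resolve correctly: the l'H\^opital comparison of $\bigl(\int e^{-V}\bigr)^2$ with $e^{-V}$ works because $V'$ is monotone and bounded away from $0$ near each endpoint by (i) and (iii). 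The two reductions to nice $u$ (compact support in the paper, bounded $u,u'$ in your write-up) are equally standard density arguments.
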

    \begin{proof} 
        Without loss of generality, as one may replace $u$ with $u-\lla\,u\,\rra_{e^{-V}}$, we assume that $\lla\,u\,\rra_{e^{-V}} = 0$. We also assume that $u$ is of class $\mathcal{C}^1$ and compactly supported in $I$ and one can then extend \emph{a posteriori} the class of admissible function by a standard density argument. Then, using (ii) one has
        \[
        \begin{array}{rcl}
            \ds\int_I |u(x)|^2\,e^{-V(x)}\,\dD x &=& \ds \frac{1}{2}\,\iint_{I\times I}|u(x)-u(y)|^2\,e^{-(V(x)+V(y))}\,\dD x\,\dD y\\[1em]
            &=& \ds \frac{1}{2}\,\iint_{I\times I}\left|\int_x^yu'(z)\dD z\right|^2\,e^{-(V(x)+V(y))}\,\dD x\,\dD y\,.
        \end{array}
        \]
        Now using the Cauchy-Schwarz inequality and assumption (i) one has
        \begin{multline*}
            \int_I |u(x)|^2\,e^{-V(x)}\,\dD x\ \leq\\ \frac{1}{2}\,\iint_{I\times I}\left(\int_x^y|u'(z)|^2 (V''(z))^{-1}\,\dD z\right)\left(\int_x^yV''(z)\dD z\right)\,e^{-(V(x)+V(y))}\,\dD x\,\dD y\,.
        \end{multline*}
        Then take any point $x_0\in I$ and define
        \[
        U(x) = \int_{x_0}^x|u'(z)|^2\,(V''(z))^{-1}\,\dD z\,,
        \]
        so that the inequality rewrites
        \[
        \begin{array}{rcl}
            &&\ds\int_I |u(x)|^2\,e^{-V(x)}\,\dD x\\[1em] &\leq& \ds \frac{1}{2}\,\iint_{I\times I}\left(U(y)-U(x)\right)\left(V'(y)-V'(x)\right)\,e^{-(V(x)+V(y))}\,\dD x\,\dD y\,.
        \end{array}
        \]
        Now just expand the right-hand side and use Fubini's theorem on each term as well as assumptions (ii) and (iii) to obtain
        \[
        \int_I |u(x)|^2\,e^{-V(x)}\,\dD x\  \leq\  \int_I U(x)\,V'(x)\,e^{-V(x)}\,\dD x\,.
        \]
        One concludes by integrating the right-hand side by parts and observing that boundary terms vanish again by assumption (iii).
    \end{proof}
    
    \begin{rema} The proof is an adaptation of the original proof of the (flat) Poincaré-Wirtinger inequality by Poincaré.
        
        Observe that for $I=\RR$ and $V(x) = x^2/2$, one recovers the classical Gaussian Poincaré inequality.
    \end{rema}
    \begin{rema}
        The inequality is sharp. It is an equality for functions of the form $u(x) = aV'(x)+b$, with $a,b\in\RR$ if $V$ is such that $V'(x)e^{-V(x)}$ tends to $0$ at the boundaries, and only for constant functions otherwise (\emph{i.e.} $a=0$ and $b\in\RR$). 
    \end{rema}
    
    From the Brascamp-Lieb inequality, we now infer Poincaré inequalities for gamma  and inverse gamma distributions.
    
    \begin{prop}\label{cor:poincare_gamma}
        Let $\alpha>1$ and $\beta>0$. Then, for any functions $u,v$ such that the integrals make sense, one has 
        \begin{equation}
            \int_0^\infty|u(x) - \lla\,u\,\rra_ {\gamma_{\alpha,\beta}}|^2\,\gamma_{\alpha,\beta}(x)\,\dD x\ \leq\ \frac{1}{\alpha-1}\,\int_0^\infty|u'(x)|^2\,\gamma_{\alpha,\beta}(x)\,x^2\,\dD x\,,
            \label{eq:weightPoincare_G}
        \end{equation}
        and
        \begin{equation}
            \int_0^\infty|v(y) - \lla\,v\,\rra_{g_{\alpha,\beta}}|^2\,g_{\alpha,\beta}(y)\,\dD y\ \leq\ \frac{1}{\alpha-1}\,\int_0^\infty|v'(y)|^2\,g_{\alpha,\beta}(y)\,y^2\,\dD y\,,
            \label{eq:weightPoincare_IG}
        \end{equation}
        where for a probability density $\nu$ the notation $\lla\,u\,\rra_{\nu}$ denotes $\int u\nu$.
    \end{prop}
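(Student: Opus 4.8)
The plan is to obtain \eqref{eq:weightPoincare_G} as a direct instance of the Brascamp--Lieb inequality of Proposition~\ref{p:poincare}, and then to deduce \eqref{eq:weightPoincare_IG} from it through the change of variable $y=1/x$ already recorded in Section~\ref{sec:gamma}.

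First I would write $\gamma_{\alpha,\beta}=e^{-V}$ on $I=(0,\infty)$ with
\[
V(x)\ =\ \beta x-(\alpha-1)\ln x-\ln C_{\alpha,\beta}\,,
\]
so that $V'(x)=\beta-(\alpha-1)/x$ and $V''(x)=(\alpha-1)/x^2$. Using $\alpha>1$, I would check the three hypotheses of Proposition~\ref{p:poincare}: $V''>0$ on $I$, so $V$ is strictly convex; $e^{-V}=\gamma_{\alpha,\beta}$ is a probability density by the very choice of $C_{\alpha,\beta}$; and $V(x)\to+\infty$ as $x\to0^+$ (the term $-(\alpha-1)\ln x$ blows up precisely because $\alpha>1$) and as $x\to+\infty$ (the term $\beta x$ blows up). Then \eqref{eq:poincare_general} applies with $(V''(x))^{-1}=x^2/(\alpha-1)$, which is exactly \eqref{eq:weightPoincare_G}.

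To pass to the inverse gamma distribution, given a test function $v$ on $(0,\infty)$ I would set $u(x)=v(1/x)$. By the identity $g_{\alpha,\beta}(y)\dD y=\gamma_{\alpha,\beta}(x)\dD x$ with $y=1/x$, the averages coincide, $\lla v\rra_{g_{\alpha,\beta}}=\lla u\rra_{\gamma_{\alpha,\beta}}$, and the left-hand side of \eqref{eq:weightPoincare_IG} equals the left-hand side of \eqref{eq:weightPoincare_G}. Since $u'(x)=-v'(1/x)/x^2$, one has $|u'(x)|^2x^2=|v'(1/x)|^2/x^2$, and applying the same change of variable once more to the right-hand side of \eqref{eq:weightPoincare_G} turns it into $\tfrac{1}{\alpha-1}\int_0^\infty|v'(y)|^2\,g_{\alpha,\beta}(y)\,y^2\,\dD y$, giving \eqref{eq:weightPoincare_IG}.

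I do not expect any real obstacle here: the only delicate points are checking that $V$ tends to $+\infty$ at \emph{both} endpoints of $(0,\infty)$ (this is where $\alpha>1$ is genuinely needed) and keeping track of the Jacobian factors in the change of variable. As in the proof of Proposition~\ref{p:poincare}, the inequalities are first established for $u,v$ of class $\mathcal{C}^1$ with compact support and then extended by density to all functions for which the two sides are finite.
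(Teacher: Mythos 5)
Your proof is correct and follows essentially the same route as the paper: apply Proposition~\ref{p:poincare} with $V(x)=\beta x-(\alpha-1)\ln x-\ln C_{\alpha,\beta}$ to get \eqref{eq:weightPoincare_G}, then deduce \eqref{eq:weightPoincare_IG} via the substitution $y=1/x$ with $u(x)=v(1/x)$. The only difference is that you spell out the verification of the hypotheses of Proposition~\ref{p:poincare} and the Jacobian bookkeeping, which the paper leaves implicit.
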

    \begin{proof}
        The first inequality is an application of \eqref{eq:poincare_general} with  $V(x) = \beta x - (\alpha-1)\ln(x) - \ln(C_{\alpha,\beta})$, where $C_{\alpha,\beta}=\beta^\alpha/\Gamma(\alpha)$. Then take $v(y) = u(1/y)$ and make the change of variable $y = 1/x$ in all the integrals of \eqref{eq:weightPoincare_G} to get the result.
    \end{proof}

    \begin{rema}\label{r:bakem}
        To the best of our knowledge the classical Bakry and Emery method does not seem to apply to show directly the functional inequalities of Proposition~\ref{cor:poincare_gamma}. Let us give some details. In order to show a Poincaré inequality of the type
        \[
        \int_I|u(x) - \lla\,u\,\rra_{e^{-V}}|^2\,e^{-V(x)}\,\dd x\ \leq\ \int_I|u'(x)|^2\,e^{-V(x)}\,D(x)\,\dD x\,,
        \]
        for $V$ as in Proposition~\ref{p:poincare}, it is sufficient that $D$ and $V$ satisfy the following curvature-dimension inequality
        \begin{multline}\label{eq:curv_dim}
            R(x)\ :=\ \frac{1}{4}(D'(x))^2-\frac{1}{2}D''(x)D(x)+D(x)^2V''(x)\\+\frac{1}{2}D'(x)D(x)V'(x)
            \ \geq\ \lambda_1\,D(x)\,,
        \end{multline}
        for some positive constant $\lambda_1>0$. We refer to \cite{bakry_1985_diffusions, bakry_1994_hypercontractivite} for the general form of the latter Bakry-Emery condition (for multidimensional anisotropic inhomogeneous diffusions) and to \cite{arnold_2001_convex} or \cite{arnold_2005_refined} for the simpler expression in the case of isotropic inhomogeneous diffusion, as discussed here. In the case of the inequalities \eqref{eq:weightPoincare_G} and \eqref{eq:weightPoincare_IG}, one has respectively $D(x) = x^2/(\alpha-1)$, $V(x) = \beta x - (\alpha-1)\ln(x) - \ln(C_{\alpha,\beta})$ and $D(y) = y^2/(\alpha-1)$, $V(y) = \beta / y + (\alpha+1)\ln(y) - \ln(C_{\alpha,\beta})$, which yields respectively $R(x) = \beta\,x^3/(\alpha-1)^2$ and $R(y) = \beta\,y/(\alpha-1)^2$. As claimed above, neither \eqref{eq:weightPoincare_G} nor \eqref{eq:weightPoincare_IG} satisfy the condition \eqref{eq:curv_dim}. One also observes that in both cases the curvature-dimension inequality fails because of a degeneracy at one end of the interval.

        Let us finally mention that there are in the literature other occurrences of Poincaré and more generally  convex Sobolev inequalities for gamma distributions \cite{bakry_1996_remarques, miclo_2003_inegalite, benaim_2008_exponential,arras_2017_stroll}. However, we found out that the diffusion coefficient is always taken of the form $D(x)=4x/\beta$. This weight, associated with the gamma invariant measure, corresponds to the Laguerre diffusion $L_{\alpha,\beta}f (x) = \beta x\,f'' (\beta x) - (\alpha - \beta x)f'(\beta x)$. This operator differs from the adjoint of the one appearing in our model \eqref{eq:FP2stat}. In this case, one can check that the curvature-dimension condition of Bakry and Emery is satisfied as soon as $\alpha\geq 1/2$. 
    \end{rema}
		
	\Max{\subsection{An upper bound for  the relative cell to cell variation}
	
	    \begin{prop}\label{prop:CV}
        One has the bound
        \[
            \mathrm{CV}(\rhofast^{\delta,\gamma,p})\ \leq\ C_\delta\ \Max{:=}\ \left(\left(\frac{\delta}{\delta-1}\right)^2\left(1-\frac{1}{(\delta-1)^2}\right)^{{\delta-2}}-1\right)^{\frac12}\,,
        \]
        which holds for all $\delta>2$, $\gamma>0$ and $p\geq0$.
    \end{prop}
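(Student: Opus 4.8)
\emph{Strategy.} The plan is to recast $\mathrm{CV}(\rhofast^{\delta,\gamma,p})^2 = m_2 m_0 / m_1^2 - 1$ (with $m_k = m_k(\rhofast^{\delta,\gamma,p})$) as the second difference of the logarithm of a one-parameter family of integrals, and to control that second difference with the Brascamp--Lieb inequality of Proposition~\ref{p:poincare}. First I would carry out the change of variables $r = \delta/t$ already used to produce the quantities $I_k$; a direct computation gives
\[
\frac{m_2\,m_0}{m_1^2}\ =\ \frac{J_0\,J_2}{J_1^2}\,,\qquad J_k\ :=\ \int_0^\infty t^{\delta-k}\,(\gamma\delta+t)^{\gamma p\delta}\,e^{-t}\,\dd t\quad(k=0,1,2)\,,
\]
since the parameter-dependent normalising constants cancel in the ratio. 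Then I would introduce
\[
\Phi(\lambda)\ :=\ \ln\int_0^\infty t^{\lambda}\,(\gamma\delta+t)^{\gamma p\delta}\,e^{-t}\,\dd t\,,
\]
finite and $\mathcal{C}^\infty$ for $\lambda$ near $[\delta-2,\delta]$ because $\delta>2$, so that $J_k = e^{\Phi(\delta-k)}$ and
\[
\ln\!\left(\frac{J_0 J_2}{J_1^2}\right)\ =\ \Phi(\delta)-2\Phi(\delta-1)+\Phi(\delta-2)\ =\ \int_0^1\!\!\int_0^1 \Phi''(\delta-2+s+t)\,\dd s\,\dd t\,,
\]
the last step being the standard integral form of a second difference.

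\emph{Key estimate.} The heart of the proof would be the bound $\Phi''(\lambda)\le 1/\lambda$ for every $\lambda>0$. Differentiating twice under the integral sign, $\Phi''(\lambda) = \mathrm{Var}_{\mu_\lambda}(\ln T)$ where $T$ has law $\mu_\lambda$, the probability measure with density proportional to $t^{\lambda}(\gamma\delta+t)^{\gamma p\delta}e^{-t}$. Writing $\mu_\lambda = e^{-V_\lambda}$ with $V_\lambda(t) = t - \lambda\ln t - \gamma p\delta\ln(\gamma\delta+t) + \mathrm{const}$, one checks that for $\lambda>0$ the function $V_\lambda$ satisfies hypotheses (i)--(iii) of Proposition~\ref{p:poincare} and that
\[
V_\lambda''(t)\ =\ \frac{\lambda}{t^2} + \frac{\gamma p\delta}{(\gamma\delta+t)^2}\ \ge\ \frac{\lambda}{t^2}\,,
\]
so $(V_\lambda''(t))^{-1}\le t^2/\lambda$. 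Applying the Brascamp--Lieb inequality with $u(t)=\ln t$, for which $(u'(t))^2(V_\lambda''(t))^{-1}\le 1/\lambda$, would then give $\mathrm{Var}_{\mu_\lambda}(\ln T)\le\int\lambda^{-1}\,\dd\mu_\lambda = \lambda^{-1}$. Crucially, $\gamma$ and $p$ disappear at this step, which is what yields the uniformity asserted in the statement.

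\emph{Conclusion.} Feeding the estimate into the double integral, using $\delta-2+s+t\ge\delta-2>0$ on $[0,1]^2$,
\[
\ln\!\left(\frac{J_0 J_2}{J_1^2}\right)\ \le\ \int_0^1\!\!\int_0^1\frac{\dd s\,\dd t}{\delta-2+s+t}\ =\ \delta\ln\delta - 2(\delta-1)\ln(\delta-1) + (\delta-2)\ln(\delta-2)\,,
\]
the last equality by an elementary primitive. Exponentiating, the right-hand side equals $\ln\!\big((\tfrac{\delta}{\delta-1})^2(1-\tfrac1{(\delta-1)^2})^{\delta-2}\big) = \ln(C_\delta^2+1)$, so $\mathrm{CV}(\rhofast^{\delta,\gamma,p})^2 = J_0 J_2/J_1^2 - 1 \le C_\delta^2$, as claimed.

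\emph{Main obstacle.} I expect the one genuinely delicate point to be the use of Proposition~\ref{p:poincare} with the \emph{unbounded} test function $u=\ln t$: one has to justify, via the density argument in that proposition's proof, that $\ln t$ is admissible for $\mu_\lambda$ when $\lambda>0$. This comes down to the integrability on $(0,\infty)$ of $(\ln t)^2 e^{-V_\lambda}$ and of $t^{-2}(V_\lambda'')^{-1}e^{-V_\lambda}$, which near $t=0$ behave like $(\ln t)^2 t^\lambda$ and $t^\lambda/\lambda$ (integrable precisely because $\lambda>0$) and near $t=\infty$ are controlled by $e^{-t}$, after which approximation by $\mathcal{C}^1$ functions with compact support in $(0,\infty)$ passes to the limit on both sides. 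The rest --- differentiation under the integral, the identity $\Phi''=\mathrm{Var}_{\mu_\lambda}(\ln T)$, and the primitive $\int_0^1\!\int_0^1(x+s+t)^{-1}\dd s\,\dd t = (x+2)\ln(x+2)-2(x+1)\ln(x+1)+x\ln x$ at $x=\delta-2$ --- is routine. (A side remark: a formal application of Proposition~\ref{p:poincare} to $1/R$ with the test function $u(s)=1/s$ even suggests the sharper bound $\mathrm{CV}(\rhofast^{\delta,\gamma,p})\le\mathrm{CV}(\rho_0^{\delta})$ of Conjecture~\ref{conj}, modulo the same admissibility caveat at the origin.)
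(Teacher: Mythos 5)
Your argument is correct, and it reaches the bound \eqref{eq:bestboundyet} by a genuinely different route from the paper. The paper's proof applies the Pr\'ekopa--Leindler inequality with $\lambda=1/2$ to the triple $f(x)=(1+x)^{\gamma p\delta}x^{\delta-2}e^{-\delta\gamma x}$, $g=x^2f$, $h=(1+C_\delta^2)^{1/2}xf$, reducing the moment inequality $m_0m_2/m_1^2\le 1+C_\delta^2$ to a pointwise condition that boils down to minimizing $z\mapsto z[(z+z^{-1})/2]^{\delta-1}$ over $z>0$; the constant $C_\delta$ emerges from that minimization. You instead write $\ln(m_0m_2/m_1^2)$ as the second difference $\Phi(\delta)-2\Phi(\delta-1)+\Phi(\delta-2)$ of a cumulant-type function, identify $\Phi''(\lambda)$ as the variance of $\ln T$ under the tilted measure $\mu_\lambda$, and bound that variance by $1/\lambda$ via the Brascamp--Lieb inequality of Proposition~\ref{p:poincare}; the extra potential term $-\gamma p\delta\ln(\gamma\delta+t)$ only adds convexity to $V_\lambda$, which is exactly why $\gamma$ and $p$ drop out. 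Remarkably, your double integral $\int_0^1\!\int_0^1(\delta-2+s+t)^{-1}\dd s\dd t$ evaluates to precisely $\ln(1+C_\delta^2)$, so the two methods produce the identical constant --- unsurprising in hindsight, since both are avatars of log-concavity. Your route has the merit of reusing the paper's own Proposition~\ref{p:poincare} and of making the uniformity in $(\gamma,p)$ structurally transparent; its only extra cost is the admissibility of the unbounded test function $u=\ln t$ and the differentiation under the integral sign, both of which you correctly flag and which are indeed routine truncation/domination arguments given $\lambda\ge\delta-2>0$. The Pr\'ekopa--Leindler route avoids these approximation issues entirely, at the price of a less motivated choice of the functions $f$, $g$, $h$.
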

    \begin{proof}
        The bound is a consequence of the Prékopa-Leindler inequality (see \cite{brascamp_1976_extensions} and references therein) which states that if $f,g,h:\RR^d\to[0,+\infty)$ are three functions satisfying for some $\lambda\in(0,1)$ and for all $x,y$,
        \begin{equation}\label{eq:cond_prekopa}
            h((1-\lambda)x+\lambda y)\,\geq\,f(x)^{1-\lambda}\,g(y)^{\lambda}\,,
        \end{equation}
        then 
        \begin{equation}\label{eq:ineg_prekopa}
            \|h\|_{L^1(\RR^d)}\ \geq\ \|f\|_{L^1(\RR^d)}^{1-\lambda}\,\|g\|_{L^1(\RR^d)}^{\lambda}\,.
        \end{equation}
        We use it with $\lambda = 1/2$,  $f(x) = (1+x)^{\gamma p\delta}x^{\delta-2}e^{-\delta \gamma x}$ if $x\geq0$ and $f(x) = 0$ if $x<0$, $g(x) =  x^2f(x)$ and $h(x) = (1+C_\delta^2)^{1/2}xf(x)$. The condition \eqref{eq:cond_prekopa} is then equivalent to 
        \[  
        (1+C_\delta^2)^{-1/2}\ \leq\ \left[\frac{\left(1+\frac{x+y}{2}\right)}{(1+x)(1+y)}\right]^{\frac{\gamma p\delta}{2}}\left(\frac yx\right)^{\frac12}\left(\frac{\left(\frac yx\right)^{\frac12}+\left(\frac xy\right)^{\frac12}}{2}\right)^{\delta-1}
        \]
        which is satisfied as the term between brackets is always greater than $1$ and the function $z\mapsto z[(z+z^{-1})/2]^{\delta-1}$, $z>0$ is bounded from below by $(1+C_\delta^2)^{-1/2}$, where $C_\delta$ is given in \eqref{eq:bestboundyet}. Then with the change of variable $x'= 1/(\gamma x)$ in the integrals of \eqref{eq:ineg_prekopa}, one recovers \eqref{eq:bestboundyet}.
    \end{proof}}

\medskip

\noindent
\textbf{Acknowledgements}	\\	
PD acknowledges support by the Engineering and Physical Sciences Research Council (EPSRC) under grants no. EP/M006883/1 and EP/N014529/1, by the Royal Society and the Wolfson Foundation through a Royal Society Wolfson Research Merit Award no. WM130048 and by the National Science  Foundation (NSF) under grant no. RNMS11-07444 (KI-Net). PD is on leave from CNRS, Institut de Math\'ematiques de Toulouse, France. MH acknowledges support by the Labex CEMPI (ANR-11-LABX-0007-01). SM acknowledges support by the 
CNRS–Royal Society exchange projects ``CODYN'' and ``Segregation models in social sciences'' and the Chaire Mod\'elisation Math\'ematique et Biodiversit\'e of V\'eolia Environment - \'Ecole Polytechnique - Museum National d’Histoire Naturelle - Fondation X. All authors would like to thank Prof. Matthias Merkenshlager from Imperial College Institute of Clinical Sciences for bringing this problem to their attention and stimulating discussions.

\medskip
\noindent
\textbf{Data statement}\\
No new data were collected in the course of this research.

    \bibliographystyle{plain}
    \bibliography{bibli}

\begin{thebibliography}{10}

\bibitem{arnold_2005_refined}
Anton Arnold and Jean Dolbeault.
\newblock Refined convex {S}obolev inequalities.
\newblock {\em J. Funct. Anal.}, 225(2):337--351, 2005.

\bibitem{arnold_2001_convex}
Anton Arnold, Peter Markowich, Giuseppe Toscani, and Andreas Unterreiter.
\newblock On convex {S}obolev inequalities and the rate of convergence to
  equilibrium for {F}okker-{P}lanck type equations.
\newblock {\em Comm. Partial Differential Equations}, 26(1-2):43--100, 2001.

\bibitem{arras_2017_stroll}
Benjamin Arras and Yvik Swan.
\newblock A stroll along the gamma.
\newblock {\em Stochastic Process. Appl.}, 127(11):3661--3688, 2017.

\bibitem{bakry_1996_remarques}
D.~Bakry.
\newblock Remarques sur les semigroupes de {J}acobi.
\newblock {\em Ast\'{e}risque}, (236):23--39, 1996.
\newblock Hommage \`a P. A. Meyer et J. Neveu.

\bibitem{bakry_1985_diffusions}
D.~Bakry and Michel \'{E}mery.
\newblock Diffusions hypercontractives.
\newblock In {\em S\'{e}minaire de probabilit\'{e}s, {XIX}, 1983/84}, volume
  1123 of {\em Lecture Notes in Math.}, pages 177--206. Springer, Berlin, 1985.

\bibitem{bakry_1994_hypercontractivite}
Dominique Bakry.
\newblock L'hypercontractivit\'{e} et son utilisation en th\'{e}orie des
  semigroupes.
\newblock In {\em Lectures on probability theory ({S}aint-{F}lour, 1992)},
  volume 1581 of {\em Lecture Notes in Math.}, pages 1--114. Springer, Berlin,
  1994.

\bibitem{benaim_2008_exponential}
Michel Bena\"{\i}m and Rapha\"{e}l Rossignol.
\newblock Exponential concentration for first passage percolation through
  modified {P}oincar\'{e} inequalities.
\newblock {\em Ann. Inst. Henri Poincar\'{e} Probab. Stat.}, 44(3):544--573,
  2008.

\bibitem{bessemoulin2018hypocoercivity}
Marianne Bessemoulin-Chatard, Maxime Herda, and Thomas Rey.
\newblock Hypocoercivity and diffusion limit of a finite volume scheme for
  linear kinetic equations.
\newblock {\em Math. Comp.}, 89(323):1093--1133, 2020.

\bibitem{bleris2011synthetic}
Leonidas Bleris, Zhen Xie, David Glass, Asa Adadey, Eduardo Sontag, and Yaakov
  Benenson.
\newblock Synthetic incoherent feedforward circuits show adaptation to the
  amount of their genetic template.
\newblock {\em Molecular systems biology}, 7(1), 2011.

\bibitem{blevins2015micrornas}
Rory Blevins, Ludovica Bruno, Thomas Carroll, James Elliott, Antoine Marcais,
  Christina Loh, Arnulf Hertweck, Azra Krek, Nikolaus Rajewsky, Chang-Zheng
  Chen, et~al.
\newblock micrornas regulate cell-to-cell variability of endogenous target gene
  expression in developing mouse thymocytes.
\newblock {\em PLoS genetics}, 11(2), 2015.

\bibitem{bobkov_2000_brunn}
S.~G. Bobkov and M.~Ledoux.
\newblock From {B}runn-{M}inkowski to {B}rascamp-{L}ieb and to logarithmic
  {S}obolev inequalities.
\newblock {\em Geom. Funct. Anal.}, 10(5):1028--1052, 2000.

\bibitem{bogachev_2015_Fokker}
Vladimir~I. Bogachev, Nicolai~V. Krylov, Michael R\"{o}ckner, and Stanislav~V.
  Shaposhnikov.
\newblock {\em Fokker-{P}lanck-{K}olmogorov equations}, volume 207 of {\em
  Mathematical Surveys and Monographs}.
\newblock American Mathematical Society, Providence, RI, 2015.

\bibitem{bosia2012gene}
Carla Bosia, Matteo Osella, Mariama El~Baroudi, Davide Cor{\`a}, and Michele
  Caselle.
\newblock Gene autoregulation via intronic micrornas and its functions.
\newblock {\em BMC systems biology}, 6(1):131, 2012.

\bibitem{brascamp_1976_extensions}
Herm~Jan Brascamp and Elliott~H. Lieb.
\newblock On extensions of the {B}runn-{M}inkowski and {P}r\'{e}kopa-{L}eindler
  theorems, including inequalities for log concave functions, and with an
  application to the diffusion equation.
\newblock {\em J. Functional Analysis}, 22(4):366--389, 1976.

\bibitem{chainais_2011_finite}
Claire Chainais-Hillairet and J\'{e}r\^{o}me Droniou.
\newblock Finite-volume schemes for noncoercive elliptic problems with
  {N}eumann boundary conditions.
\newblock {\em IMA J. Numer. Anal.}, 31(1):61--85, 2011.

\bibitem{chang1970practical}
JS~Chang and G~Cooper.
\newblock A practical difference scheme for {F}okker-{P}lanck equations.
\newblock {\em Journal of Computational Physics}, 6(1):1--16, 1970.

\bibitem{code}
Pierre Degond, Maxime Herda, and Sepideh Mirrahimi.
\newblock {F}{P}mu{R}{N}{A}.
\newblock \url{https://gitlab.inria.fr/herda/fpmurna}, 2020.

\bibitem{degond2019uncertainty}
Pierre Degond, Shi Jin, and Yuhua Zhu.
\newblock An uncertainty quantification approach to the study of gene
  expression robustness.
\newblock {\em arXiv preprint arXiv:1910.07188}, 2019.

\bibitem{ebert2012roles}
Margaret~S Ebert and Phillip~A Sharp.
\newblock Roles for micrornas in conferring robustness to biological processes.
\newblock {\em Cell}, 149(3):515--524, 2012.

\bibitem{gilbarg_2001_elliptic}
David Gilbarg and Neil~S. Trudinger.
\newblock {\em Elliptic partial differential equations of second order}.
\newblock Classics in Mathematics. Springer-Verlag, Berlin, 2001.
\newblock Reprint of the 1998 edition.

\bibitem{gillespie_1976_general}
Daniel~T. Gillespie.
\newblock A general method for numerically simulating the stochastic time
  evolution of coupled chemical reactions.
\newblock {\em J. Comput. Phys.}, 22(4):403--434, 1976.

\bibitem{gillespie2000chemical}
Daniel~T Gillespie.
\newblock The chemical langevin equation.
\newblock {\em The Journal of Chemical Physics}, 113(1):297--306, 2000.

\bibitem{hasminski_1960_ergodic}
R.~Z. Has{$'$}minski\u{\i}.
\newblock Ergodic properties of recurrent diffusion processes and stabilization
  of the solution of the {C}auchy problem for parabolic equations.
\newblock {\em Teor. Verojatnost. i Primenen.}, 5:196--214, 1960.

\bibitem{herranz2010micrornas}
H{\'e}ctor Herranz and Stephen~M Cohen.
\newblock Micrornas and gene regulatory networks: managing the impact of noise
  in biological systems.
\newblock {\em Genes \& development}, 24(13):1339--1344, 2010.

\bibitem{huang_2015_integral}
Wen Huang, Min Ji, Zhenxin Liu, and Yingfei Yi.
\newblock Integral identity and measure estimates for stationary
  {F}okker-{P}lanck equations.
\newblock {\em Ann. Probab.}, 43(4):1712--1730, 2015.

\bibitem{huang_2015_steady}
Wen Huang, Min Ji, Zhenxin Liu, and Yingfei Yi.
\newblock Steady states of {F}okker-{P}lanck equations: {I}. {E}xistence.
\newblock {\em J. Dynam. Differential Equations}, 27(3-4):721--742, 2015.

\bibitem{lotstedt2006dimensional}
Per L{\"o}tstedt and Lars Ferm.
\newblock Dimensional reduction of the {F}okker--{P}lanck equation for
  stochastic chemical reactions.
\newblock {\em Multiscale Modeling \& Simulation}, 5(2):593--614, 2006.

\bibitem{miclo_2003_inegalite}
Laurent Miclo.
\newblock Sur l'in\'{e}galit\'{e} de {S}obolev logarithmique des op\'{e}rateurs
  de {L}aguerre \`a petit param\`etre.
\newblock In {\em S\'{e}minaire de {P}robabilit\'{e}s, {XXXVI}}, volume 1801 of
  {\em Lecture Notes in Math.}, pages 222--229. Springer, Berlin, 2003.

\bibitem{nist_2010}
Frank W.~J. Olver, Daniel~W. Lozier, Ronald~F. Boisvert, and Charles~W. Clark,
  editors.
\newblock {\em N{IST} handbook of mathematical functions}.
\newblock U.S. Department of Commerce, National Institute of Standards and
  Technology, Washington, DC; Cambridge University Press, Cambridge, 2010.
\newblock With 1 CD-ROM (Windows, Macintosh and UNIX).

\bibitem{osella_2011_role}
Matteo Osella, Carla Bosia, Davide Cor{\'a}, and Michele Caselle.
\newblock The role of incoherent microrna-mediated feedforward loops in noise
  buffering.
\newblock {\em PLoS Comput Biol}, 7(3):e1001101, 2011.

\bibitem{perthame_2015_parabolic}
Beno\^{\i}t Perthame.
\newblock {\em Parabolic equations in biology}.
\newblock Lecture Notes on Mathematical Modelling in the Life Sciences.
  Springer, Cham, 2015.
\newblock Growth, reaction, movement and diffusion.

\bibitem{kampen_1981_stochastic}
N.~G. van Kampen.
\newblock {\em Stochastic processes in physics and chemistry}, volume 888 of
  {\em Lecture Notes in Mathematics}.
\newblock North-Holland Publishing Co., Amsterdam-New York, 1981.

\end{thebibliography}
    
\end{document}